\numberwithin{equation}{section}
\let\expandafter\oldproof\csname\string\proof\endcsname
\let\oldendproof\endproof
\renewenvironment{proof}[1][\proofname]{%
  \oldproof[#1]%
}{\oldendproof}
\definecolor{cornellred}{rgb}{0.7,0.11,0.11}
\theoremstyle{plain}
\newtheorem{theorem}{Theorem}[section]
\newtheorem{lemma}[theorem]{Lemma}
\newtheorem{proposition}[theorem]{Proposition}
\newtheorem{corollary}[theorem]{Corollary}
\theoremstyle{definition}
\newtheorem{definition}[theorem]{Definition}
\newtheorem{remark}[theorem]{Remark}
\newtheorem{notation}[theorem]{Notation}
\DeclareMathAlphabet{\mathcal}{OMS}{cmsy}{m}{n}
\newcommand{\Cs}{\ensuremath{\mathrm{C}^\ast}}
\DeclareMathOperator{\rt}{\mathsf{rt}}
\DeclareMathOperator{\1}{\mathbf{1}}
\DeclareMathOperator{\id}{\mathrm{id}}
\DeclareMathOperator{\Aut}{\mathrm{Aut}}
\DeclareMathOperator{\ev}{\mathrm{ev}}
\DeclareMathOperator{\Ad}{\mathrm{Ad}}
\DeclareMathOperator{\N}{\mathbb{N}}
\DeclareMathOperator{\Z}{\mathbb{Z}}
\DeclareMathOperator{\M}{\mathcal{M}}
\DeclareMathOperator{\U}{\mathcal{U}}
\DeclareMathOperator{\K}{\mathcal{K}}
\DeclareMathOperator{\I}{\mathcal{I}}
\DeclareMathOperator{\C}{\mathcal{C}}
\DeclareMathOperator{\B}{\mathcal{B}}
\DeclareMathOperator{\Hil}{\mathcal{H}}
\DeclareMathOperator{\O2}{\mathcal{O}_2}
\DeclareMathOperator{\Oinf}{\mathcal{O}_{\infty}}
\DeclareMathOperator{\e}{\varepsilon}
\DeclareMathOperator{\f}{\varphi}
\DeclareMathOperator{\p}{\mathfrak{p}}
\DeclareMathOperator{\bbu}{\mathbbm{u}}
\DeclareMathOperator{\Prim}{\mathrm{Prim}}
\pgfplotsset{compat=1.18}
\setlist[enumerate,itemize]{noitemsep,nolistsep}
\begin{document}

\title[Continuous actions on primitive ideal spaces lift to \Cs-actions]{Continuous actions on primitive ideal spaces lift to \Cs-actions}

\author{Matteo Pagliero}
\address{Department of Mathematics, KU Leuven, Celestijnenlaan 200b, box 2400, B-3001 Leuven, Belgium}
\curraddr{}
\email{matteo.pagliero@kuleuven.be}
\thanks{}


\subjclass[2020]{46L55, 46L35}

\keywords{}

\date{}

\dedicatory{}

\begin{abstract}
We prove that for any second-countable, locally compact group $G$, any continuous $G$-action on the primitive ideal space of a separable, nuclear \Cs-algebra $B$ such that $B \cong B\otimes\O2\otimes\K$ is induced by an action on $B$.\
As a direct consequence, we establish that every continuous action on the primitive ideal space of a separable, nuclear \Cs-algebra is induced by an action on a \Cs-algebra with the same primitive ideal space.\
Moreover, we discuss an application to the classification of equivariantly $\O2$-stable actions.
\end{abstract}
\maketitle

\tableofcontents

\section*{Introduction}
\renewcommand\thetheorem{\rm{\Alph{theorem}}}

In this article, we investigate the following natural question:
Does a continuous group action on a primitive ideal space lift to an action on a \Cs-algebra?\
Here, the set of primitive ideals of a \Cs-algebra, i.e., those that can be realised as kernels of non-zero irreducible representations, is endowed with the Jacobson (or hull-kernel) topology.\
Given any (point-norm continuous) group action $\beta:G\curvearrowright B$ of a locally compact group on a \Cs-algebra, one can induce an algebraic action $\beta^{\sharp}:G\curvearrowright\Prim(B)$ as $\beta^{\sharp}_g(I) = \ker(\pi \circ \beta_{g^-1})$ for all $g\in G$ and irreducible non-zero representations $\pi$ of $B$.\
It is a well-known fact that the induced action $\beta^{\sharp}$ is continuous with respect to the Jacobson topology of $\Prim(B)$ (see Remark \ref{rem:continuity induced action}), and thus the question above is reasonable to begin with.\
Note that $\Prim(B\otimes\O2\otimes\K)$ is homeomorphic to $\Prim(B)$ for any \Cs-algebra $B$ because the Cuntz algebra $\O2$ and the \Cs-algebra of compact operators $\K$ are simple.\
Therefore, one can assume $B\cong B\otimes\O2\otimes\K$ without loss of generality for the problem under consideration.\
We are able to answer the question above in the positive as follows.

\begin{theorem}\label{thm-intro:realisation}
Let $G$ be a second-countable, locally compact group, and $B$ a separable, nuclear \Cs-algebra such that $B\cong B\otimes\O2\otimes\K$.\
For every continuous action $\gamma:G\curvearrowright\Prim(B)$, there exists an action $\beta:G\curvearrowright B$ such that $\beta^{\sharp}=\gamma$.
\end{theorem}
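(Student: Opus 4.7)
My overall strategy is to first produce a model equivariant $C^*$-algebra realising the prescribed primitive ideal space action, and then transport this action to $B$ via a non-equivariant isomorphism provided by Kirchberg's classification. Kirchberg's absorption and classification theorems for $\O2\otimes\K$-stable separable nuclear $C^*$-algebras ensure that $B$ is determined up to isomorphism by $\Prim(B)$, and that any homeomorphism of $\Prim(B)$ lifts to a $*$-automorphism of $B$, uniquely up to approximate unitary equivalence. Hence, if I can construct some separable, nuclear, $\O2\otimes\K$-stable $C^*$-algebra $D$ together with a continuous $G$-action $\delta:G\curvearrowright D$ such that $\delta^{\sharp}$ is conjugate to $\gamma$ through a $G$-equivariant homeomorphism $\Prim(D)\cong\Prim(B)$, then lifting this homeomorphism to a $*$-isomorphism $\varphi:D\to B$ and setting $\beta_g := \varphi\circ\delta_g\circ\varphi^{-1}$ yields an action on $B$ with $\beta^{\sharp}=\gamma$.

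The core task is therefore to build the model $(D,\delta)$ for an arbitrary continuous $G$-action on an arbitrary second-countable sober $T_0$-space $X=\Prim(B)$. When $X$ is locally compact Hausdorff the construction is immediate: take $D=C_0(X,\O2\otimes\K)$ with $\delta$ pulled back from $\gamma$. In general $X$ is only $T_0$, so one must combine Kirchberg's non-equivariant realisation techniques with an equivariant refinement. A natural plan is to write $X$ as a filtered union of open $G$-invariant subsets on which inductive approximations can be performed, and then assemble these into a model using the $G$-equivariance of the induced ideal lattice maps together with the uniqueness afforded by $\O2$-stability.

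The main obstacle I anticipate is controlling point-norm continuity of $\delta$ for general second-countable locally compact $G$. Even once one knows how to realise a single homeomorphism at the level of the ideal lattice, making the choices depend continuously on $g\in G$ and satisfy the cocycle identity on the nose typically fails, so one is forced into an absorption argument to eliminate the resulting obstruction. This should be possible using the equivariant $\O2$-absorption machinery developed in the author's prior work \cite{PS23}: tensoring with an auxiliary model $G$-action on $\O2\otimes\K$ having trivial induced action on $\Prim$ gains enough room to rectify approximate homomorphisms into genuine actions. Once the equivariant absorption step is established, transferring $\delta$ back to $B$ through Kirchberg's classification completes the proof.
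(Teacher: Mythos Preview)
Your high-level outline---build a model $G$-$C^*$-algebra whose induced action on its primitive ideal space is conjugate to $\gamma$, then transport along a Kirchberg/Gabe isomorphism---is exactly the outer shape of the paper's argument. The gap is in the middle step: you have not actually produced the model $(D,\delta)$, and the method you sketch for doing so does not work as stated.

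Concretely, your plan is to lift each $\gamma_g$ to an automorphism of $B$ using Kirchberg's classification and then repair both the continuity in $g$ and the failure of the group law via ``equivariant $\O2$-absorption''. But Kirchberg's theorem gives lifts that are unique only up to approximate unitary equivalence; there is no mechanism to choose $g\mapsto\beta_g$ even measurably, and the defect $\beta_g\beta_h\beta_{gh}^{-1}$ is only an approximately inner automorphism, not a $2$-cocycle in any workable sense. The results in \cite{PS23} classify or absorb \emph{existing} actions; they do not rectify such loose families into genuine point-norm continuous actions. Likewise, writing $\Prim(B)$ as a filtered union of $G$-invariant open sets does not by itself produce a model: on each piece you face the same lifting problem, and there is nothing forcing compatibility of the stagewise choices.

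The paper sidesteps all of this with two ideas you are missing. First, rather than lifting $\gamma_g$ for each $g$, one encodes the entire action as a \emph{single} homeomorphism $\Theta$ of $G\times\Prim(B)=\Prim(\C_0(G)\otimes B)$, $(g,\p)\mapsto(g,\gamma_g(\p))$, lifts $\Theta$ once to an automorphism $\theta$ of $A:=\C_0(G)\otimes B$, and then defines a genuine action by conjugating the explicit action $\rt\otimes\Ad(\lambda^\infty)$ by $\theta$. This yields an honest $\alpha:G\curvearrowright A$ with $\alpha^\sharp=\rt^\sharp\times\gamma$ and no continuity or cocycle issues. Second---and this is the technical heart of the paper---one must collapse the extraneous $G$-factor: an $\alpha$-equivariant $\f:A\to\M(A)$ is constructed so that the associated Harnisch--Kirchberg crossed product $(\f_\infty(A))_\sigma\rtimes_\sigma\Z$ carries a $G$-action and has ideal lattice canonically isomorphic to $\I(B)$, with the induced $G$-action matching $\gamma$. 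Only after this does the Kirchberg isomorphism back to $B$ finish the argument.
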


Kirchberg's classification theorem for $\O2$-stable \Cs-algebras is a key result in the classification of non-simple, strongly purely infinite, separable, nuclear \Cs-algebras.\
First announced by Kirchberg in 1994, and only recently published with a new proof by Gabe \cite{Gab20}, it states that two separable, nuclear, stable (or unital) \Cs-algebras that tensorially absorb the Cuntz algebra $\O2$ are isomorphic precisely when their primitive ideal spaces are homeomorphic.\
Motivated by the techniques developed by Gabe and Szabó in \cite{GS22a,GS22b} to obtain a dynamical version of the Kirchberg--Phillips theorem \cite{Kir,KP00,Phi00}, Szabó and the author have recently generalised Kirchberg's theorem for $\O2$-stable \Cs-algebras to a classification of second-countable, locally compact group actions up to cocycle conjugacy (see \cite[Theorem 5.6]{PS23}).\
This states that two amenable, isometrically shift-absorbing, equivariantly $\O2$-stable actions $\alpha:G\curvearrowright A$ and $\beta:G\curvearrowright B$ on separable, nuclear, stable \Cs-algebras are cocycle conjugate if and only if $\alpha^{\sharp}$ is conjugate to $\beta^{\sharp}$.\
Cocycle conjugacy is weaker than conjugacy, i.e., equivariant isomorphism, and it is the right notion of isomorphism in the classification theory of locally compact group actions on \Cs-algebras (see \cite{Sza21} for a detailed outline of this categorical framework).

\begin{definition}\label{def:comorp}
Let $\alpha:G\curvearrowright A$ and $\beta:G\curvearrowright B$ be actions on \Cs-algebras.\
A \emph{cocycle conjugacy} from $(A,\alpha)$ to $(B,\beta)$ is a pair $(\f,\bbu)$ consisting of a $\ast$-isomorphism $\f:A\to B$ and a strictly continuous map $\bbu:G\to\U(\M(B))$ such that $\f \circ \alpha_g = \Ad(\bbu_g) \circ \beta_g \circ \f$ and $\bbu_{gh}=\bbu_g\beta_{g}(\bbu_h)$ for all $g,h\in G$.
\end{definition}

In the last part of the present work, we are able prove the following result using the aforementioned classification result from \cite{PS23} and Theorem \ref{thm-intro:realisation}.

\begin{theorem}\label{cor-intro}
Let $G$ be a second-countable, locally compact group, and $A$ a separable, nuclear, stable, $\O2$-stable \Cs-algebra.\
Then associating $\alpha^{\sharp}$ to each action $\alpha:G\curvearrowright A$ gives the following bijection:
\begin{equation*}
\frac{\begin{Bmatrix}\text{\rm{amenable, isometrically shift-absorbing,}}\\ \text{\rm{equivariantly $\O2$-stable actions }}G\curvearrowright A \end{Bmatrix}}{\text{\rm{cocycle conjugacy}}} \longrightarrow \frac{\begin{Bmatrix}\text{\rm{continuous actions}}\\ G\curvearrowright\Prim(A) \end{Bmatrix}}{\text{\rm{conjugacy}}}
\end{equation*}
If $G$ is moreover compact, ``cocycle conjugacy" above can be replaced with ``conjugacy".
\end{theorem}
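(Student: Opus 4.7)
The plan is to combine Theorem~\ref{thm-intro:realisation} with the dynamical classification theorem \cite[Theorem~5.6]{PS23}, verifying well-definedness, injectivity, and surjectivity of the assignment $\alpha\mapsto\alpha^{\sharp}$ in turn. Well-definedness is immediate: if $(\f,\bbu)$ is a cocycle conjugacy from $\alpha$ to $\beta$, then the induced homeomorphism $\f^{\sharp}:\Prim(A)\to\Prim(A)$ conjugates $\alpha^{\sharp}$ to $\beta^{\sharp}$, since each $\Ad(\bbu_g)$ is inner and therefore fixes every primitive ideal.

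Injectivity falls out directly from \cite[Theorem~5.6]{PS23}: if $\alpha^{\sharp}$ and $\beta^{\sharp}$ are conjugate via some homeomorphism of $\Prim(A)$, that theorem, applied under the given regularity hypotheses on $\alpha$ and $\beta$, produces an actual cocycle conjugacy between $\alpha$ and $\beta$.

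Surjectivity is where the real work sits. Given a continuous action $\gamma:G\curvearrowright\Prim(A)$, Theorem~\ref{thm-intro:realisation} supplies some action $\alpha_0:G\curvearrowright A$ with $\alpha_0^{\sharp}=\gamma$, but $\alpha_0$ need not be amenable, isometrically shift-absorbing, or equivariantly $\O2$-stable. I would correct for this by tensoring with a ``model'' action $\rho:G\curvearrowright\O2\otimes\K$ that is itself amenable, isometrically shift-absorbing, and equivariantly $\O2$-stable---such reference actions are available for every second-countable locally compact $G$ from the constructions in \cite{GS22a,GS22b,PS23}. Since each of these three properties passes to the tensor product $\alpha_0\otimes\rho$ and since $A\otimes\O2\otimes\K\cong A$ by hypothesis, transporting $\alpha_0\otimes\rho$ along such an isomorphism yields an action on $A$ with the required regularity whose induced action on $\Prim(A)$ is conjugate to $\gamma$ via the canonical homeomorphism $\Prim(A\otimes\O2\otimes\K)\cong\Prim(A)$.

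For the compact-group addendum, I would invoke the fact that for compact $G$ every strictly continuous unitary $1$-cocycle of an equivariantly $\O2$-stable action is a coboundary, so cocycle conjugacy collapses to genuine conjugacy in this setting. The principal obstacle in the plan lies in the surjectivity step: one must confirm that a model action $\rho$ satisfying all three regularity properties exists in full generality, and check that tensoring with $\rho$ together with transport along $A\otimes\O2\otimes\K\cong A$ preserves these properties intact.
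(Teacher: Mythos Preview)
Your proposal is correct and follows the same overall strategy as the paper: injectivity from \cite[Theorem~5.6]{PS23}, surjectivity from Theorem~\ref{thm-intro:realisation}, and the compact-group addendum from the companion result \cite[Corollary~5.8]{PS23}.

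The one substantive difference is in the surjectivity step. The paper's proof is extremely terse here --- it simply asserts that surjectivity ``follows from Theorem~\ref{thm-intro:realisation}'' without addressing why the lifted action may be taken amenable, isometrically shift-absorbing, and equivariantly $\O2$-stable. You have correctly identified this gap and filled it via the standard device of tensoring with a model action $\rho:G\curvearrowright\O2\otimes\K$ carrying all three properties. That said, your proposal is still slightly soft at exactly the point you flag: the existence of such a model $\rho$ for arbitrary second-countable locally compact $G$ (in particular amenability of the action, not of $G$) is the key input, and you should pin down a precise reference rather than gesture at \cite{GS22a,GS22b,PS23} collectively. For the compact case, rather than re-deriving a cocycle-triviality statement, it is cleaner to cite \cite[Corollary~5.8]{PS23} directly, as the paper does.
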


Phillips has recently made available a joint work with Kirchberg in which they prove a partial version of Theorem \ref{thm-intro:realisation}, namely \cite[Theorem 5.9]{KP23}, under the additional assumption that the primitive ideal space is homeomorphic to the (non-Hausdorff) compactification in the sense of \cite[Definition 1.5]{KP23} of a second-countable, non-compact, locally compact space, which, in particular, forces $A$ to be prime (see \cite[Corollary 1.7]{KP23}).\
This overlaps with results in this article, and it would not be hard to adapt their proof to obtain Theorem \ref{thm-intro:realisation} in full generality, although this is not explicitly stated by the authors.\
In fact, the approach of the present work is conceptually inspired by Kirchberg--Phillips' core idea, which is outlined in an extended abstract within an Oberwolfach report \cite{Obe}.\
However, the author was not aware of their subsequent work.\
Moreover, it should be noted that this article is completely self-contained and, contrarily to Kirchberg--Phillips',  does not require any preliminary knowledge about the work of Harnisch--Kirchberg \cite{HK24}, which has only very recently been made available.\
Conversely, the present work does not provide information about what topological spaces arise as primitive ideal spaces, which is the main novelty of Harnisch and Kirchberg's work.

Section 1 and 2 can be viewed as a self-contained exposition of (part of) Harnisch--Kirchberg's work \cite{HK24}, which will be relevant in the rest of the article.\
We choose to report all proofs, and while some are just recalled from \cite{HK24}, others are new and shorter.\
An example of this is Lemma \ref{lem:aperiodic}, where we can directly show aperiodicity of the $\Z$-action, while the original proof needed to pass to the double dual of the \Cs-algebra.
The main aim of Section 3 is to present the right setting for proving our main theorem.\
In particular, we recall a construction from \cite{HK24}, and give a dynamical version of it that will be useful in the rest of this work, which we believe is of independent interest.\
In the final part of the article, Section 4, we prove the main result and the aforementioned application to the classification of equivariantly $\O2$-stable actions.


\renewcommand\thetheorem{\arabic{section}.\arabic{theorem}}

\section{Harnisch--Kirchberg systems}
The notation present in this section follows that of \cite{HK24}, with the exception of Definition \ref{def:hk-system}, where we use the name \emph{Harnisch--Kirchberg system} for a dynamical object that is called $\sigma$-modular in \cite{HK24}.\
We recapitulate a number of results from the same work, and complement them with some new useful observations.\
We always include proofs to make the present work self-contained.

In the present work, we choose by convention that $0\in\N$.

\begin{notation}[{cf.\ \cite[Section 4]{HK24}}]
Let $B$ be a \Cs-algebra, $\sigma\in\Aut(B)$ an automorphism, and $D \subseteq B$ a \Cs-subalgebra.\
For every $k\in\Z$ and $\ell \geq k$, one defines the vector space
\begin{align*}
D_{k,\ell}=\sigma^{k-1}(D)+\dots+\sigma^{\ell-1}(D).
\end{align*}
One can moreover form the following vector spaces,
\begin{equation*}
D_{-\infty,k}=\overline{\bigcup_{n\in\N}D_{k-n,k}}, \quad D_{k,\infty}=\overline{\bigcup_{n\in\N}D_{k,k+n}}, \quad D_{\sigma}=\overline{\bigcup_{n\in\N}D_{-n,n}}.
\end{equation*}
\end{notation}

\begin{lemma}[{cf.\ \cite[Lemma 4.7(i)]{HK24}}]\label{lem:properties1}
Let $B$ be a \Cs-algebra, $\sigma\in\Aut(B)$, and $D \subseteq B$ a \Cs-subalgebra.\
For every $i,k,\ell\in\Z$ with $k\leq\ell$, one has that
\begin{align*}
\sigma^i(D_{k,\ell})=D_{k+i,\ell+i},  &&&\sigma^i(D_{-\infty,\ell})=D_{-\infty,\ell+i},\\
\sigma^i(D_{k,\infty})=D_{k+i,\infty}, &&&\sigma^i(D_\sigma)=D_\sigma.
\end{align*}
\end{lemma}
\begin{proof}
The following identities are true by definition,
\begin{equation*}
\sigma^i(D_{k,\ell})=\sigma^i(\sigma^{k-1}(D)+\dots+\sigma^{\ell-1}(D))=\sigma^{k+i-1}(D)+\dots+\sigma^{\ell+i-1}(D)=D_{k+i,\ell+i}
\end{equation*}
for all $k\leq\ell$ and $i$ in $\Z$.\
This implies that the statements for $D_{-\infty,\ell}$, $D_{k,\infty}$ and $D_{\sigma}$ also hold true as they are inductive limits of $\{D_{\ell-n,\ell}\}_{n\in\N}$, $\{D_{k,k+n}\}_{n\in\N}$ and $\{D_{-n,n}\}_{n\in\N}$, respectively.
\end{proof}

\begin{lemma}[{cf.\ \cite[Lemma 4.8]{HK24}}]\label{lem:subalgebra}
Let $C,D$ be \Cs-subalgebras of a \Cs-algebra $B$ such that $CD\subseteq D$.\
Then the $\ast$-subalgebra of $B$ given by $C+D$ is a \Cs-algebra.
\end{lemma}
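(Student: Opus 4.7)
The plan is to show, in order, that $C+D$ is a $\ast$-subalgebra, and then that it is norm-closed, the latter via a standard quotient argument applied inside the $\Cs$-closure $E := \overline{C+D}$.

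First I would verify that $C+D$ is closed under the $\ast$-algebra operations. Addition is obvious, and closure under $\ast$ follows from the fact that both $C$ and $D$ are self-adjoint as $\Cs$-subalgebras. For multiplication, the main observation I would record is that taking adjoints in the assumption $CD \subseteq D$ yields $DC = (D^\ast C^\ast)^\ast = (CD)^\ast \subseteq D^\ast = D$. Hence for $c_i \in C$ and $d_i \in D$,
\begin{equation*}
(c_1+d_1)(c_2+d_2) = c_1c_2 + c_1d_2 + d_1c_2 + d_1d_2 \in C + D,
\end{equation*}
since each cross term lies in $D$ and $c_1c_2 \in C$.

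Next, to get norm-closedness, I would introduce the $\Cs$-subalgebra $E := \overline{C+D} \subseteq B$ and claim that $D$ is a closed two-sided ideal of $E$. Indeed, given $e \in E$, pick a sequence $c_n + d_n \to e$ with $c_n \in C$, $d_n \in D$; then for any $d \in D$ one has $(c_n + d_n)d = c_n d + d_n d \in CD + DD \subseteq D$, and since $D$ is norm-closed the limit $ed$ also lies in $D$. The argument that $de \in D$ is symmetric, using $DC \subseteq D$.

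Now the classical ``$\Cs$-subalgebra plus closed ideal is closed'' fact applies cleanly inside $E$: let $\pi:E\to E/D$ be the quotient map. Its restriction $\pi|_C$ is a $\ast$-homomorphism between $\Cs$-algebras, so the image $\pi(C) \subseteq E/D$ is a $\Cs$-subalgebra and in particular norm-closed. Consequently
\begin{equation*}
C+D = \pi^{-1}\!\big(\pi(C)\big)
\end{equation*}
is norm-closed in $E$, which forces $C+D = E$ and finishes the proof. The main (and only) conceptual step is the verification that $D$ is an ideal in $E$; everything else is bookkeeping around the standard quotient trick, so I do not expect any serious obstacle.
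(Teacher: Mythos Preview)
Your proof is correct and uses essentially the same quotient trick as the paper: both establish that $D$ is a two-sided ideal and then use the quotient by $D$ to reduce to the closedness of the image of $C$. The paper phrases this as an explicit $\ast$-isomorphism $C/(C\cap D)\cong (C+D)/D$, whereas you pass to the closure $E=\overline{C+D}$ and invoke that $\pi|_C$ has closed range; these are minor variations on the same idea, and your version has the small advantage of making the ambient $\Cs$-structure of the quotient explicit from the start.
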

\begin{proof}
We know that $D$ is a closed two-sided ideal of $C+D$ as $CD\subseteq D$, and $DC=(C^*D^*)^*=(CD)^*\subseteq D^*=D$.\
Moreover, $C\cap D$ is a closed two-sided ideal of $C$.\
The map $\f: C/(C\cap D) \to (C+D)/D$ given by $\f(x+ (C\cap D)) = x+ D$ for all $x\in C$, is a well-defined $\ast$-homomorphism of $\ast$-algebras.\
Moreover, it is injective for if $x+D = y+D$ for some $x,y\in C$, we have that $(x-y)\in C\cap D$, and thus $y+(C\cap D)=y+(x-y)+(C\cap D)=x+(C\cap D)$.\
It is surjective because for any element $x+D$ of $(C+D)/D$, there exists $x'\in C$ such that $x+D=x'+D$ with $x'\in C$, and $\f(x'+(C\cap D)) = x'+D$.\
Since $C/(C\cap D)$ is a \Cs-algebra, so is $\f(C/(C\cap D))=(C+D)/D$.\
Hence, being the preimage of the quotient map $(C+D)\to(C+D)/D$, $C+D$ is norm-closed, and thus a \Cs-algebra.
\end{proof}

\begin{lemma}[{cf.\ \cite[Lemma 4.7(ii)]{HK24}}]\label{lem:properties2}
Let $B$ be a \Cs-algebra, $\sigma\in\Aut(B)$, and $D \subseteq B$ a \Cs-subalgebra such that $D\sigma(D)=\sigma(D)$.\
Then, for all $n>0$ and integers $k\leq\ell$ and $i\leq j$, one has that
\begin{equation*}
D\sigma^n(D)=\sigma^n(D), \quad \text{and}\quad D_{k,\ell}D_{i,j}\subseteq D_{\max(k,i),\max(\ell,j)}.
\end{equation*}

Moreover, $D_{k,\ell},D_{-\infty,\ell}$, and $D_{k,\infty}$ are \Cs-subalgebras of $B$, and $D_{\sigma}$ is the smallest $\sigma$-invariant \Cs-subalgebra of $B$ containing $D$.
\end{lemma}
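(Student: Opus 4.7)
The plan is to first establish a pointwise version of the displayed inclusion, namely $\sigma^{a}(D)\sigma^{b}(D)\subseteq\sigma^{\max(a,b)}(D)$ for all $a,b\in\Z$, and then deduce everything else from this together with Lemma \ref{lem:subalgebra}.

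The starting observation is that applying $\sigma^{k}$ to the hypothesis $D\sigma(D)=\sigma(D)$ yields the equality $\sigma^{k-1}(D)\sigma^{k}(D)=\sigma^{k}(D)$ for every $k\in\Z$. With this in hand, one would induct on $b-a\geq 0$ to prove $\sigma^{a}(D)\sigma^{b}(D)\subseteq\sigma^{b}(D)$: the base case $a=b$ is trivial since $\sigma^{a}(D)$ is a $\ast$-subalgebra, and for the inductive step one rewrites $\sigma^{b}(D)=\sigma^{b-1}(D)\sigma^{b}(D)$ and invokes associativity together with the inductive hypothesis $\sigma^{a}(D)\sigma^{b-1}(D)\subseteq\sigma^{b-1}(D)$. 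The symmetric inclusion for $a\geq b$ follows by taking adjoints. Expanding $D_{k,\ell}D_{i,j}$ as a sum of single-exponent products $\sigma^{a}(D)\sigma^{b}(D)$ with $a\in\{k-1,\dots,\ell-1\}$ and $b\in\{i-1,\dots,j-1\}$, each summand lies in $\sigma^{\max(a,b)}(D)$, and as $(a,b)$ varies $\max(a,b)$ ranges exactly over $\{\max(k,i)-1,\dots,\max(\ell,j)-1\}$, which gives the stated inclusion.

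For the second half, the \Cs-subalgebra property for the finite bands $D_{k,\ell}$ is proved by induction on $\ell-k$: the base case $\ell=k$ reduces to $\sigma^{k-1}(D)$, and for the inductive step one writes $D_{k,\ell+1}=D_{k,\ell}+\sigma^{\ell}(D)$ and uses the inclusion just established to see that $D_{k,\ell}\cdot\sigma^{\ell}(D)\subseteq\sigma^{\ell}(D)$, placing the two summands exactly in the hypothesis of Lemma \ref{lem:subalgebra}. The infinite variants $D_{-\infty,k}$, $D_{k,\infty}$, and $D_{\sigma}$ are then closures of directed unions of such \Cs-subalgebras, and hence \Cs-subalgebras themselves.

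For the remaining claims about $D_{\sigma}$, invariance follows from Lemma \ref{lem:properties1}: since $\sigma(D_{-n,n})=D_{-n+1,n+1}\subseteq D_{-(n+1),n+1}$, the automorphism $\sigma$ preserves the defining ascending union, and by continuity also its norm closure (the analogous computation with $\sigma^{-1}$ handles the other direction). Minimality is immediate, since any $\sigma$-invariant \Cs-subalgebra of $B$ containing $D$ must contain every $\sigma^{j}(D)$, hence every $D_{-n,n}$, and then $D_{\sigma}$ itself by norm-closedness. The only point I expect to require genuine care is the inductive step in the pointwise inclusion, because it crucially uses the $\supseteq$ direction of $D\sigma(D)=\sigma(D)$ to factor elements of $\sigma^{b}(D)$ through $\sigma^{b-1}(D)$; the $\subseteq$ direction alone would not suffice.
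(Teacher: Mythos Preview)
Your proof is correct and follows essentially the same approach as the paper: the paper also reduces the displayed inclusion to the single-exponent relation $\sigma^{i}(D)\sigma^{j}(D)\subseteq\sigma^{\max(i,j)}(D)$ (derived from iterating $D\sigma(D)=\sigma(D)$, written there as an explicit chain rather than a formal induction), and then runs the same induction on $\ell-k$ using Lemma~\ref{lem:subalgebra} to get the \Cs-subalgebra claims. Your closing observation about needing the $\supseteq$ direction of the hypothesis is exactly right and matches how the paper uses it.
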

\begin{proof}
First, observe that if $n=1$, $D\sigma^n(D)=D\sigma(D)=\sigma(D)$ by assumption.\
Assume that $D\sigma^n(D)=\sigma^n(D)$ for some $n>1$, and note that
\begin{equation*}
D\sigma^{n+1}(D) = D\sigma^n(D\sigma(D)) = D\sigma^n(D)\sigma^{n+1}(D)=\sigma^n(D\sigma(D))=\sigma^{n+1}(D).
\end{equation*}
By induction on $n\in\N$ we get that the first condition holds.\
Observe that whenever $i\leq j$ are integers, by the previous part of the proof we have that
\begin{equation*}
\sigma^i(D)\sigma^j(D)=\sigma^i(D\sigma^{j-i}(D))=\sigma^i(\sigma^{j-i}(D))= \sigma^j(D).
\end{equation*}
Then, since we know that $\sigma^j(D)\sigma^i(D)=\left(\sigma^i(D)\sigma^j(D)\right)^*=\sigma^j(D)$, this proves the second condition.

Fore the moreover part, note that each $D_{k,k}=\sigma^{k-1}(D)$, for $k\in\Z$, is a \Cs-algebra as $\sigma$ is an automorphism.\
It follows from Lemma \ref{lem:subalgebra} that $D_{k,k+1}=D_{k,k}+\sigma^{k}(D)$ is a \Cs-algebra as well because $D_{k,k}\sigma^{k}(D)=\sigma^{k-1}(D\sigma(D))\subseteq\sigma^{k}(D)$.\
Suppose that $D_{k,\ell}$ is a \Cs-algebra, where $\ell>k+1$.\
It follows from the first part of the lemma, which we have just proven, that $D_{k,\ell}\sigma^{\ell}(D)\subseteq\sigma^{\ell}(D)$, and thus $D_{k,\ell+1}=D_{k,\ell}+\sigma^{\ell}(D)$ is a \Cs-algebra by Lemma \ref{lem:subalgebra}.\
By induction on $\ell>k+1$, we have that $D_{k,\ell}$ is a \Cs-algebra for all pairs of integers $k\leq\ell$.\
As a consequence, $D_{k,\infty}$ and $D_{-\infty,\ell}$ are \Cs-algebras for all $k,\ell\in\Z$.\
Since $D_\sigma$ is generated by all $\sigma^{k}(D)$, for $k\in\Z$, it is the smallest $\sigma$-invariant \Cs-subalgebra of $B$ containing $D$.
\end{proof}

It should be noted that, throughout the article, whenever we say that $I$ is an ideal of a \Cs-algebra $A$, we assume that $I$ closed and two-sided.

\begin{definition}
Let $A$ be a \Cs-algebra.\
An ideal $I$ of $A$ is \emph{essential} if for every non-zero ideal $J\in\I(A)$, $I\cap J\neq\{0\}$.\
Equivalently, $I$ is an essential ideal of $A$ if $A\cap I^{\perp} = \{a\in A \mid ab=0,\; \text{for all}\; b\in I\} = \{0\}$.
\end{definition}

\begin{lemma}[{cf.\ \cite[Lemma 4.9+4.10]{HK24}}]\label{lem:ideals}
Let $B$ be a \Cs-algebra, $\sigma\in\Aut(B)$, and $D \subseteq B$ a \Cs-subalgebra such that $D\sigma(D)=\sigma(D)$.\
Then, for all integers  $k_1\leq k_2\leq \ell$, $D_{k_2,\ell}$ is an ideal of $D_{k_1,\ell}$, and $D_{\ell,\infty}$ is an ideal of $D_\sigma$.

If moreover $D\cap\sigma(D)=\{0\}$ and $D\cap\sigma(D)^{\perp}= \{0\}$,\footnote{Here, one should formally write $(D+\sigma(D))\cap\sigma(D)^{\perp}$. However, since the only element of $\sigma(D)$ that annihilates $\sigma(D)$ itself is $0$, we abuse notation, and write $D\cap\sigma(D)^{\perp}$ instead.} then $D_{k_2,\ell}$ is an essential ideal of $D_{k_1,\ell}$, and $D_{\ell,\infty}$ is an essential ideal of $D_\sigma$.
\end{lemma}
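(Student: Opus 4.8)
The plan is to obtain all four statements by first settling the finite algebras $D_{k,\ell}$ through the multiplication estimates of Lemma \ref{lem:properties2}, and then deducing the claims about $D_\sigma$ and $D_{\ell,\infty}$ by passing to closures of increasing unions. For the ideal assertions I would argue directly from Lemma \ref{lem:properties2}: for $k_1\le k_2\le\ell$ one has $D_{k_1,\ell}D_{k_2,\ell}\subseteq D_{\max(k_1,k_2),\ell}=D_{k_2,\ell}$ and symmetrically $D_{k_2,\ell}D_{k_1,\ell}\subseteq D_{k_2,\ell}$, so the \Cs-subalgebra $D_{k_2,\ell}$ is an ideal of $D_{k_1,\ell}$. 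For $D_{\ell,\infty}$ inside $D_\sigma$ I would first record the single-summand identity $\sigma^a(D)\sigma^b(D)=\sigma^{\max(a,b)}(D)$ (exactly what the proof of Lemma \ref{lem:properties2} yields): a product of a generator $\sigma^a(D)$ of $D_\sigma$ with a generator $\sigma^b(D)$ of $D_{\ell,\infty}$ (so $b\ge\ell-1$) lands in $\sigma^{\max(a,b)}(D)\subseteq D_{\ell,\infty}$, and multiplying out dense subsets then gives $D_\sigma D_{\ell,\infty},\,D_{\ell,\infty}D_\sigma\subseteq D_{\ell,\infty}$.

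For essentiality I would reduce everything to a single basic step. By Lemma \ref{lem:properties1} the operator $\sigma^i$ shifts the index window, so each consecutive inclusion $D_{j+1,\ell}\subseteq D_{j,\ell}$ is a translate of $D_{2,\ell-j+1}\subseteq D_{1,\ell-j+1}$; since essential ideals compose transitively, it suffices to prove that $D_{2,m}$ is essential in $D_{1,m}$ for every $m\ge 2$. Here the key preliminary observation is that $D\cap D_{2,m}=\{0\}$: an approximate unit $(u_\lambda)$ of $\sigma(D)$ is a two-sided approximate unit for $D_{2,m}$ because $\sigma(D)\sigma^j(D)=\sigma^j(D)$ for $j\ge 1$, so any $d\in D\cap D_{2,m}$ equals $\lim_\lambda u_\lambda d\in\sigma(D)D=\sigma(D)$ and hence lies in $D\cap\sigma(D)=\{0\}$. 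Consequently $D_{1,m}/D_{2,m}\cong D$, the annihilator $N=(D_{2,m})^{\perp}\cap D_{1,m}$ maps injectively into $D$, and writing $D_{2,m}=\sigma(D)\oplus D_{3,m}$ (the sum being direct since $\sigma(D)\cap D_{3,m}=\sigma(D\cap D_{2,m-1})=\{0\}$) and projecting the identity $x\sigma(D)=0$ onto the $\sigma(D)$-summand shows that the $D$-part $d$ of any $x\in N$ satisfies $(d+c)\sigma(D)=0$ for the $\sigma(D)$-component $c$ of $x$. Thus $d+c$ lies in the base-case annihilator, and $N$ vanishes provided the base case does.

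The heart of the proof, and the step I expect to be the main obstacle, is therefore the base case $m=2$: that no nonzero $x=d+c$ with $d\in D$ and $c\in\sigma(D)$ satisfies $x\sigma(D)=\sigma(D)x=0$. This is where the two hypotheses must be combined. The condition $D\cap\sigma(D)=\{0\}$ supplies the direct decomposition $D+\sigma(D)=D\oplus\sigma(D)$ and the identification of the annihilator with an ideal of $D$, while $D\cap\sigma(D)^{\perp}=\{0\}$ is precisely the statement that no nonzero element of $D$ annihilates $\sigma(D)$, i.e. that the multiplier homomorphism $D\to\M(\sigma(D))$ is injective. The natural way to bring this to bear is to reformulate essentiality of $\sigma(D)$ in $D+\sigma(D)$ as injectivity of the associated Busby map $D\cong(D+\sigma(D))/\sigma(D)\to\M(\sigma(D))/\sigma(D)$, and to analyse the relations $c^*c=-c^*d$ and $cc^*=-dc^*$ forced on $x$; extracting the vanishing of $x$ cleanly from these relations is the delicate point, and one should expect to use both conditions simultaneously rather than either in isolation.

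Finally, the two infinite statements follow from the finite ones by the same equivariance-and-transitivity reduction together with a passage to the limit. Writing $D_{1,\infty}=\overline{\bigcup_m D_{1,m}}$, the annihilator of $D_{2,\infty}$ meets each stage $D_{1,m}$ trivially by the finite case (since $(D_{2,\infty})^{\perp}\cap D_{1,m}\subseteq(D_{2,m})^{\perp}\cap D_{1,m}=\{0\}$), and analogously $D_{\ell,\infty}$ is essential in each $D_{k,\infty}$ with $k\le\ell$; writing $D_\sigma=\overline{\bigcup_k D_{k,\infty}}$, a standard inductive-limit argument then upgrades these local statements to triviality of the annihilators in the respective closures, which is the desired essentiality.
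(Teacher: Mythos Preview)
Your overall strategy matches the paper's: the ideal assertions via Lemma~\ref{lem:properties2}, finite essentiality reduced by shifts and transitivity to a base case, and the infinite claims by an inductive-limit argument. The paper proves $D_{n,n}$ essential in $D_{1,n}$ by induction on $n$ (so any $D_{k_2,\ell}\supseteq D_{\ell,\ell}$ then inherits essentiality by containment), whereas you reduce $D_{2,m}\subseteq D_{1,m}$ directly to $m=2$ via the decomposition $D_{1,m}=D\oplus\sigma(D)\oplus D_{3,m}$; both routes work. Your independent proof of $D\cap D_{2,m}=\{0\}$ via an approximate unit of $\sigma(D)$ is a clean touch, since in the paper this fact is Lemma~\ref{lem:independence}, proved \emph{after} and \emph{using} the present lemma.

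The genuine gap is precisely where you flag it: the base case $m=2$. You correctly note that $D\cap\sigma(D)^{\perp}=\{0\}$ says $D\to\M(\sigma(D))$ is injective, whereas essentiality of $\sigma(D)$ in $D_{1,2}$ is injectivity of the Busby map $D\to\M(\sigma(D))/\sigma(D)$; your sketch does not bridge this, and the relations $c^*c=-c^*d$, $cc^*=-dc^*$ alone cannot. Abstractly the two conditions differ: in $B=C_0(0,1)^2$ with $\sigma(D)=0\oplus C_0(0,1)$ and $D$ the diagonal copy of $C_0(0,1)$, one has $D\sigma(D)=\sigma(D)$, $D\cap\sigma(D)=\{0\}$, $D\cap\sigma(D)^{\perp}=\{0\}$, yet $x=(f,0)=d+c$ with $d=(f,f)$, $c=(0,-f)$ satisfies all your relations while $x\ne 0$. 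The paper does not argue this step either; it simply \emph{asserts} that the hypothesis ``is equivalent to saying that $D_{2,2}=\sigma(D)$ is an essential ideal of $D_{1,2}$'' and takes that as the induction basis. In the paper's applications (e.g.\ Lemma~\ref{lem:phisystem}) it is always this stronger form that is verified directly, so the practical resolution is to read condition~(iii) of Definition~\ref{def:hk-system} as the statement that $\sigma(D)$ is essential in $D+\sigma(D)$; once you grant that, both your reduction and the paper's induction go through without further difficulty.
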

\begin{proof}
The first part follows from Lemma \ref{lem:properties2}.

The assumption asking that $D\cap\sigma(D)^{\perp}= \{0\}$ is equivalent to saying that $D_{2,2}=\sigma(D)$ is an essential ideal of $D_{1,2}=D+\sigma(D)$.\
Suppose that $D_{n,n}$ is an essential ideal of $D_{1,n}$.\
Let $x\in D_{1,n+1}$ with decomposition $x=y+z$, where $y\in D_{1,n}$ and $z\in D_{n+1,n+1}$, and assume that $xD_{n+1,n+1}=\{0\}$.\
Then, for every element $d\in D_{n,n}$, we have from Lemma \ref{lem:properties2} that $dx=dy+dz\in D_{n,n}+D_{n+1,n+1}=\sigma^{n-1}(D_{1,2})$.\
Since $(D+\sigma(D))\cap\sigma(D)^{\perp}= \{0\}$, we have that $dxD_{n+1,n+1}=\{0\}$ implies $dx=0$.\
Hence, $dy=-dz\in \sigma^{n-1}(D\cap\sigma(D))=\{0\}$, so that $D_{n,n}y=0$ and since we assumed that $D_{n,n}$ is essential in $D_{1,n}$, it must be $y=0$.\
Therefore, $x=z\in D_{n+1,n+1}$ with $xD_{n+1,n+1}=0$, which implies that $x=0$.\
Arguing by induction on $n\geq1$, we have shown that $D_{n,n}$ is an essential ideal of $D_{1,n}$ for all $n\geq1$.

Now, for the general case, pick integers $k_1\leq k_2\leq \ell$.\
To show that $D_{k_2,\ell}$ is an essential ideal of $D_{k_1,\ell}$, first observe that $D_{\ell,\ell}=\sigma^{k_1-1}(D_{\ell+1-k_1,\ell+1-k_1})$ is essential in $D_{k_1,\ell}=\sigma^{k_1-1}(D_{1,\ell+1-k_1})$.\
Moreover, we have already observed that $D_{k_2,\ell}$ is an ideal of $D_{k_1,\ell}$.\
Since $D_{k_2,\ell}$ contains $D_{\ell,\ell}$, it follows that it is an essential ideal of $D_{k_1,\ell}$.

By definition, $D_\sigma$ is the inductive limit \Cs-algebra of $\{D_{-n,n}\}_{n\in\N}$ with canonical inclusions.\
Hence, if $J\subseteq D_{\sigma}$ is a non-zero ideal, it is the limit of the inductive system given by $J_n=J\cap D_{-n,n}$ with restriction of canonical inclusions for all $n\in\N$.\
Fix some $\ell\in\Z$.\
One has that there exists $n_0\in\N$ with $n_0\geq\ell$, and such that $J_{n_0}\neq\{0\}$.\
Since $D_{n_0,n_0}$ is an essential ideal of $D_{-n_0,n_0}$, it follows that $J_{n_0}\cap D_{n_0,n_0}\neq\{0\}$.\
Finally, one may conclude that $D_{\ell,\infty}$ is an essential ideal of $D_\sigma$ as $J\cap D_{\ell,\infty} \supseteq J_{n_0}\cap D_{n_0,n_0}$.
\end{proof}

The following definition is introduced with the sole purpose of streamlining a number of statements later in the article.

\begin{definition}[{cf.\ \cite[Definition 4.2]{HK24}}]\label{def:hk-system}
Let $B$ be a \Cs-algebra equipped with an automorphism $\sigma\in\Aut(B)$, and $D\subseteq B$ a \Cs-subalgebra.
We say that the triple $(B,\sigma,D)$ is a \emph{Harnisch--Kirchberg system} if the following conditions hold,
\begin{enumerate}[label=\textup{(\roman*)},leftmargin=*]
\item $D\sigma(D)=\sigma(D)$,
\item $D\cap\sigma(D) = \{0\}$,
\item $D\cap\sigma(D)^{\perp}= \{0\}$.
\end{enumerate}
\end{definition}

Note that, if $(B,\sigma,D)$ is a triple with the properties of Definition \ref{def:hk-system}, then Harnisch and Kirchberg say that $D$ is $\sigma$-modular (see \cite[Definition 4.2]{HK24}).\
We avoid using this terminology.

\begin{lemma}[{cf.\ \cite[Lemma 4.11]{HK24}}]\label{lem:independence}
Let $(B,\sigma,D)$ be a Harnisch--Kirchberg system.\
Then, for all integers $k_1\leq k_2< k_3$, we have that $D_{k_1,k_2}\cap D_{k_2+1,k_3}=\{0\}$.
\end{lemma}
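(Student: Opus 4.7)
The plan is to induct on $n$ to prove $D_{1, m} \cap D_{m+1, n} = \{0\}$ for all integers $1 \leq m < n$; this suffices because Lemma~\ref{lem:properties1} lets us apply $\sigma^{-(k_1-1)}$ and reduce an arbitrary triple $(k_1, k_2, k_3)$ to $(1, k_2 - k_1 + 1, k_3 - k_1 + 1)$. The base case $n = 2$, $m = 1$ is exactly the assumption $D \cap \sigma(D) = \{0\}$ built into a Harnisch--Kirchberg system.

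In the inductive step at $n \geq 3$, I would first dispatch the subcase $m \geq 2$. For $x \in D_{1, m} \cap D_{m+1, n}$, two applications of Lemma~\ref{lem:properties2} give $x \cdot D_{m, m} \subseteq D_{1, m} D_{m, m} \subseteq D_{m, m}$ and $x \cdot D_{m, m} \subseteq D_{m+1, n} D_{m, m} \subseteq D_{m+1, n}$, so $x \cdot D_{m, m} \subseteq D_{m, m} \cap D_{m+1, n}$. Shifting by $\sigma^{-(m-1)}$ identifies the right-hand side with $D \cap D_{2, n-m+1}$, which vanishes by the inductive hypothesis because $n - m + 1 \leq n - 1$. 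By symmetry $D_{m, m} \cdot x = \{0\}$, and since $D_{m, m} = \sigma^{m-1}(D)$ is an essential ideal of $D_{1, m}$ by Lemma~\ref{lem:ideals}, this forces $x = 0$.

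The remaining subcase $m = 1$ requires an extra step, because the essential-ideal trick collapses (since $D$ is trivially essential in itself). For $x \in D \cap D_{2, n}$, I would use $D_{2, n} = \sigma(D) + D_{3, n}$ to write $x = s + z$ with $s \in \sigma(D)$ and $z \in D_{3, n}$. Then $z = x - s \in D + \sigma(D) = D_{1, 2}$, placing $z$ in $D_{1, 2} \cap D_{3, n}$; but this intersection vanishes by the subcase $m \geq 2$ applied to the triple $(1, 2, n)$ within the same inductive step, which itself only appeals to the inductive hypothesis at $n - 1$. Hence $z = 0$ and $x = s \in D \cap \sigma(D) = \{0\}$.

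The main obstacle is exactly the need for this within-level reduction from $m = 1$ to $m = 2$; the non-uniqueness of the decomposition $x = s + z$ is harmless, because the argument only needs the existence of \emph{some} such decomposition together with vanishing of $D_{1, 2} \cap D_{3, n}$. Once one notices that the $(1, 1, n)$ intersection is controlled via the natural sum decomposition of $D_{2, n}$ by the $(1, 2, n)$ intersection, everything else reduces to bookkeeping with the multiplication bounds in Lemma~\ref{lem:properties2} and the essentiality statement of Lemma~\ref{lem:ideals}.
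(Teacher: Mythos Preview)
Your argument is correct and uses the same three ingredients as the paper (the shift Lemma~\ref{lem:properties1}, the multiplication bound Lemma~\ref{lem:properties2}, and essentiality from Lemma~\ref{lem:ideals}), but the logic is organised differently. The paper first uses essentiality of $D_{k_2,k_2}$ in $D_{k_1,k_2}$ to reduce \emph{once and for all} to the single-parameter family $D\cap D_{2,n}=\{0\}$, and then inducts on $n$ by splitting at the \emph{top} of the range, writing $d\in D\cap D_{2,n+1}$ as $v+w$ with $v\in D_{2,n}$ and $w\in D_{n+1,n+1}$; a nonzero $w$ would lie in $D_{1,n}\cap D_{n+1,n+1}$, and essentiality of $D_{n,n}$ in $D_{1,n}$ then contradicts $D\cap\sigma(D)=\{0\}$. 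You instead carry the two-parameter statement through the induction, dispose of $m\geq 2$ by multiplying into $D_{m,m}$ and invoking essentiality, and handle $m=1$ by splitting $D_{2,n}$ at the \emph{bottom} to reduce to the $m=2$ case at the same level. The paper's route is a bit shorter because the upfront reduction removes the need for your within-level $(1,1,n)\to(1,2,n)$ step; your route has the mild advantage that the multiplication argument for $m\geq 2$ makes the role of essentiality slightly more transparent.
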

\begin{proof}
Observe that it is enough to show that $D_{k_2,k_2}\cap D_{k_2+1,k_3}=\{0\}$ for if $D_{k_1,k_2}\cap D_{k_2+1,k_3}$ is a non-zero ideal of $D_{k_1,k_2}$, then in particular $D_{k_2,k_2}\cap D_{k_2+1,k_3}\neq\{0\}$ as by Lemma \ref{lem:ideals} $D_{k_2,k_2}$ is an essential ideal of $D_{k_1,k_2}$.\
Hence, after applying $\sigma$ a suitable number of times, it is sufficient to show that $D \cap D_{2,n}=\{0\}$ for all $n\geq2$.

By assumption, $D \cap D_{2,2}=D\cap\sigma(D)=\{0\}$.\
Suppose that $D \cap D_{2,n}=\{0\}$ for some $n>2$.\
Assume, towards a contradiction, that there exists a non-zero element $d\in D\cap D_{2,n+1}$, and find $v\in D_{2,n}$ and $w\in D_{n+1,n+1}$ such that $d=v+w$.\
It follows that $w\neq0$ for if $w=0$, then $d=v \in D \cap D_{2,n}=\{0\}$.\
Now, we have shown that $w=d-v\in D_{1,n}\cap D_{n+1,n+1}$ is a non-zero element, and therefore $D_{1,n}\cap D_{n+1,n+1}$ is a non-zero ideal of $D_{1,n}$.\
Since $D_{n,n}$ is an essential ideal of $D_{1,n}$ by Lemma \ref{lem:ideals}, we have that $D_{n,n}\cap D_{1,n}\cap D_{n+1,n+1}\neq0$, and thus $\sigma^n(D\cap\sigma(D))\neq0$, a contradiction.\
We argue by induction on $n$ that $D \cap D_{2,n}=\{0\}$ for all $n\geq2$, which finishes the proof.
\end{proof}

\begin{lemma}\label{lem:approxunit}
Let $(B,\sigma,D)$ be a Harnisch--Kirchberg system.\
For any given integer $k\in\Z$, an approximate unit of $D_{k,k}$ is also an approximate unit of $D_{k,\infty}$.
\end{lemma}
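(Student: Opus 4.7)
The plan is as follows. Let $(e_\lambda)_\lambda$ be an approximate unit for $D_{k,k}$, which I may take to satisfy $\|e_\lambda\|\leq 1$. The goal is to show that $e_\lambda x\to x$ and $xe_\lambda\to x$ for every $x\in D_{k,\infty}$. By definition, $\bigcup_{n\in\N} D_{k,k+n}$ is dense in $D_{k,\infty}$, and each $D_{k,k+n}$ is the vector-space sum $\sum_{j=0}^{n-1} D_{k+j,k+j}$. Hence, by linearity together with a standard $\varepsilon/3$-argument (which uses the uniform bound $\|e_\lambda\|\leq 1$), the problem reduces to proving the convergence $e_\lambda y\to y$ and $ye_\lambda\to y$ for all $y\in D_{k+j,k+j}$ and every integer $j\geq 0$.

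The case $j=0$ is immediate from the definition of approximate unit. For $j\geq 1$, the key input is Remark \ref{rem:essential}, which bootstraps the defining identity $D\sigma(D)=\sigma(D)$ to $D\sigma^j(D)=\sigma^j(D)$. Taking adjoints, and using that each $\sigma^{\ell}(D)$ is self-adjoint, further yields $\sigma^j(D)D=\sigma^j(D)$. Applying $\sigma^{k-1}$ to these two identities and invoking Lemma \ref{lem:properties1} gives
\begin{equation*}
D_{k,k}\cdot D_{k+j,k+j}=D_{k+j,k+j} \quad\text{and}\quad D_{k+j,k+j}\cdot D_{k,k}=D_{k+j,k+j}.
\end{equation*}

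Given $y\in D_{k+j,k+j}$ and $\varepsilon>0$, I would then approximate $y$ to within $\varepsilon/3$ by a finite sum $\sum_i a_ib_i$ with $a_i\in D_{k,k}$ and $b_i\in D_{k+j,k+j}$; since $e_\lambda a_i\to a_i$ for each $i$, one has $e_\lambda\!\sum_i a_ib_i=\sum_i (e_\lambda a_i)b_i\to \sum_i a_ib_i$, and a final $\varepsilon/3$-estimate using $\|e_\lambda\|\leq 1$ produces $e_\lambda y\to y$. The companion statement $ye_\lambda\to y$ follows in exactly the same way from the right-hand identity displayed above. The only non-routine ingredient is the bootstrap from $D\sigma(D)=\sigma(D)$ to its iterates, but this has already been recorded in Remark \ref{rem:essential}, so I do not anticipate any real obstacle.
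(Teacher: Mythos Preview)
Your argument is correct and follows essentially the same idea as the paper's proof: both exploit that $D_{k,k}$ acts non-degenerately on each $D_{k+j,k+j}$ to transfer the approximate-unit property. The paper proceeds by an induction on $\ell$, using at each step the factorisation $x=yz$ with $y\in D_{\ell,\ell}$, $z\in D_{\ell+1,\ell+1}$ (implicitly invoking Cohen--Hewitt), whereas you go directly from $D_{k,k}$ to $D_{k+j,k+j}$ via Remark~\ref{rem:essential} and work with finite-sum approximations instead of exact factorisation; these are minor technical variations of the same argument.
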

\begin{proof}
By assumption $D\sigma(D)=\sigma(D)$, hence $D_{k,k}D_{k+1,k+1}=D_{k+1,k+1}$.\
Assume that $\{u_\lambda\}_{\lambda} \subseteq D_{k,k}$ is an approximate unit for $D_{k,k}$.\
We have that for any $x\in D_{k+1,k+1}$, there exist $y\in D_{k,k}$ and $z\in D_{k+1,k+1}$ such that $x=yz$.\
Therefore, $\lim_{\lambda}u_\lambda x=\lim_{\lambda}(u_\lambda y)z = yz=x$.\
Assume now that $\{u_\lambda\}_{\lambda} \subseteq D_{k,k}$ is an approximate unit for $D_{k,\ell}$, for some $\ell>k$.\
It follows in particular that $\{u_\lambda\}_{\lambda}$ is an approximate unit for $D_{\ell,\ell}$, and, from the previous part of the proof, of $D_{\ell+1,\ell+1}$.\
This implies that $\{u_\lambda\}_{\lambda}$ is an approximate unit for $D_{k,\ell+1}$.\
By induction on $\ell\geq k$, $\{u_\lambda\}_{\lambda}$ is an approximate unit for $D_{k,\ell}$ for all $\ell\geq k$, and thus for $D_{k,\infty}$.
\end{proof}

\begin{notation}
Given a \Cs-algebra $A$ and an automorphism $\sigma$ of $A$, we denote by $A\rtimes_{\sigma}\Z$ the full crossed product \Cs-algebra of $A$ by the $\Z$-action induced by $\sigma$.\
For a definition of full crossed products by discrete groups see, e.g., \cite[Definition 4.1.2]{BO08}.\
Note that here the reduced crossed product is the same as the full one because $\Z$ is amenable (see, e.g., \cite[Theorem 4.2.6]{BO08}).\
It is important to keep in mind, as it will be used later in the article, that if $I$ is a $\sigma$-invariant ideal of $A$, then $I\rtimes_{\sigma}\Z$ can be naturally identified with an ideal of $A\rtimes_{\sigma}\Z$.
\end{notation}

We clarify the terminology used in the following lemma.\
The \Cs-algebra of compact operators on a Hilbert space $\Hil$ is denoted by $\K(\Hil)$, and when $\Hil$ is separable and infinite-dimensional one usually uses the shorthand notation $\K$.\
Moreover, a \Cs-algebra $A$ is said to be stable if $A\cong A\otimes\K$.

\begin{proposition}\label{prop:stability}
Let $(B,\sigma,D)$ be a Harnisch--Kirchberg system, where $D$ is $\sigma$-unital.\
If $D$ is stable, then so are $D_{\sigma}$ and $D_{\sigma}\rtimes_{\sigma}\Z$.
\end{proposition}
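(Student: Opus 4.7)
The plan is to first establish stability of the sub-C*-algebra $D_{1,\infty}$ by transferring Cuntz-type isometries from $\M(D)$, then deduce stability of $D_\sigma$ via the shift $\sigma$ and an approximation argument, and finally obtain stability of the crossed product by the same isometry-transfer principle.

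Since $D$ is stable and $\sigma$-unital, a chosen isomorphism $D\cong D\otimes\K$ yields isometries $\{s_k\}_{k\in\N}\subset\M(D)$ satisfying $s_j^*s_k=\delta_{jk}\1$ and $\sum_k s_ks_k^*=\1$ strictly, coming from the canonical Cuntz isometries in $\M(\K)$. By Lemma~\ref{lem:approxunit}, any approximate unit of $D$ is also an approximate unit of $D_{1,\infty}$, and hence the inclusion $D\hookrightarrow D_{1,\infty}$ is non-degenerate. It therefore extends to a unique unital $*$-homomorphism $\bar\iota:\M(D)\to\M(D_{1,\infty})$ that is strictly continuous on bounded sets. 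Applying $\bar\iota$ gives isometries $\bar\iota(s_k)\in\M(D_{1,\infty})$ with the same orthogonality relations, and strict continuity guarantees $\sum_k\bar\iota(s_k)\bar\iota(s_k)^*=\1$ strictly. The standard characterization of stability via such a sequence of multiplier isometries then yields that $D_{1,\infty}$ is stable.

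By Lemma~\ref{lem:properties1} we have $\sigma^{-n}(D_{1,\infty})=D_{1-n,\infty}$, so each $D_{1-n,\infty}$ is isomorphic to $D_{1,\infty}$ via $\sigma^{-n}$, and is therefore stable. Since $D_\sigma=\overline{\bigcup_{n\in\N}D_{1-n,\infty}}$ is $\sigma$-unital (using that $D$ is), I would apply the approximate Hjelmborg--R{\o}rdam characterization of stability: for every $a\in(D_\sigma)_+$ with $\|a\|\le 1$ and $\e>0$, approximate $a$ by a positive element $a'\in D_{1-n,\infty}$ for $n$ sufficiently large; use stability of $D_{1-n,\infty}$ to find $v$ there with $\|v^*v-a'\|$ and $\|vv^*a'\|$ sufficiently small; and deduce via the triangle inequality that $v$ witnesses the criterion for $a$ in $D_\sigma$. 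Hence $D_\sigma$ is stable.

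For $D_\sigma\rtimes_\sigma\Z$, the same principle applies: stability of $D_\sigma$ yields isometries $\{t_k\}\subset\M(D_\sigma)$ summing strictly to $\1$, and the canonical embedding $D_\sigma\hookrightarrow D_\sigma\rtimes_\sigma\Z$ is non-degenerate, so the strictly continuous extension $\M(D_\sigma)\to\M(D_\sigma\rtimes_\sigma\Z)$ transfers the $t_k$ to isometries satisfying the same relations in $\M(D_\sigma\rtimes_\sigma\Z)$, establishing stability of the crossed product. The main subtle point is the preservation of the strict summation-to-$\1$ relation under the multiplier extension $\bar\iota$ in the first step, which hinges on strict continuity of $\bar\iota$ on bounded subsets; the rest amounts to routine approximation and a standard fact about non-degenerate embeddings of C*-algebras into their crossed products.
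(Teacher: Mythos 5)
Your proof is correct and follows essentially the same route as the paper: stability of $D_{1,\infty}$ via Lemma \ref{lem:approxunit}, then of $D_\sigma$ as the closure of the increasing union of the stable algebras $\sigma^{-n}(D_{1,\infty})$, then of the crossed product. The only difference is that the paper simply cites \cite[Proposition 4.4, Corollary 4.1, Corollary 4.5]{HR98} for the three transfer-of-stability steps, whereas you reprove those facts directly via multiplier isometries and the approximate Hjelmborg--R{\o}rdam stability criterion.
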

\begin{proof}
By combining Lemma \ref{lem:approxunit} and \cite[Proposition 4.4]{HR98} we obtain that $D_{1,\infty}$ is stable.\
Hence, $D_\sigma$ can be expressed as the inductive limit of stable, $\sigma$-unital \Cs-algebras, thus implying with \cite[Corollary 4.1]{HR98} that it is stable.\
Moreover, it follows from \cite[Corollary 4.5]{HR98} that $D_{\sigma}\rtimes_{\sigma}\Z$ is stable as well.
\end{proof}

\section{Ideal structure of Harnisch--Kirchberg systems}
In this section, we investigate the ideal structure of Harnisch--Kirchberg systems.\
The main result of this section, Theorem \ref{thm:orderisomorphism}, first appeared in \cite{HK24}.\
Nevertheless, here we present a new proof, which differs from previous work in that it does not involve the use of the double dual of a \Cs-algebra to obtain Lemma \ref{lem:aperiodic}, exploiting instead work of Kwa{\'s}niewski and Meyer \cite{KM18}, which yields a shorter, and more direct, proof.

Let us first recall some well-known facts about the ideal structure of \Cs-algebras.

\begin{definition}
The set $\I(A)$ of all closed, two-sided ideals of $A$ ordered by inclusion is a lattice, where sum and intersection of ideals provide the join and meet operations, respectively.

The set $\Prim(A)$ of all primitive ideals of $A$, i.e., the elements $I\in\I(A)$ of the form $\ker(\pi)=I$ for non-zero irreducible representations $\pi$ of $A$, is endowed with the Jacobson (or hull-kernel) topology.\
Recall that this topology is defined by saying that the closure of a subset $U\subseteq\Prim(A)$ is
\begin{equation*}
\overline{U}=\bigg\{J\in\Prim(A) \mid J \supseteq \bigcap_{I\in U} I \bigg\}.
\end{equation*}
\end{definition}

\begin{definition}
    Let $B$ be a \Cs-algebra, $A\subseteq B$ a \Cs-subalgebra, and $\mathcal{L}\subseteq\I(B)$ a non-empty subset of the ideal lattice of $B$.\
    The \Cs-algebra $A$ is said to \textit{detect ideals in $\mathcal{L}$} if every non-zero ideal in $\mathcal{L}$ has non-zero intersection with $A$.\footnote{Note that if $\mathcal{L}$ only contains the zero ideal, this is a vacuus condition.}
    If $A$ detects ideals in $\I(B)$, one says that \emph{$A$ detects ideals in $B$}.\footnote{In the literature, an inclusion $A\subseteq B$ where $A$ detects ideals in $B$ may be called essential, or satisfying the ideal intersection property.\ We follow the notation used in \cite[Definition 2.2]{KM18}}.
\end{definition}

Detection of ideals is ultimately a representation theoretic property.\
Indeed, it is well-known that a \Cs-subalgebra $A$ detects ideals in $B$ if and only if every $\ast$-representation of $B$ with faithful restriction to $A$ was already faithful on $B$.\
In the following lemma, we generalise this fact.

\begin{lemma}\label{lem:representation}
Let $B$ be a \Cs-algebra, $A\subseteq B$ a \Cs-subalgebra, and $\mathcal{L}\subseteq\I(B)$ be a subset.\
$A$ detects ideals in $\mathcal{L}$ if and only if every $\ast$-representation of $B$ which is faithful on $A$ and whose kernel is in $\mathcal{L}$ is in fact faithful on $B$.
\end{lemma}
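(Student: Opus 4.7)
The plan is a direct two-way argument; there is no real obstacle, as the statement essentially just translates the definition of ideal detection into the language of $\ast$-representations via the correspondence between ideals and kernels of quotient representations.

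For the forward direction, I would assume that $A$ detects ideals in $\mathcal{L}$ and let $\rho$ be an unfaithful $\ast$-representation of $B$ with $\rho|_A$ faithful. Then $\ker(\rho)$ is a non-zero ideal of $B$. If $\ker(\rho)$ belonged to $\mathcal{L}$, ideal detection would force $\ker(\rho)\cap A\neq\{0\}$, producing a non-zero element of $A$ sent to zero by $\rho$; this would contradict faithfulness of $\rho|_A$. Hence $\ker(\rho)\notin\mathcal{L}$.

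For the reverse direction, I would argue by contrapositive. Suppose $A$ fails to detect ideals in $\mathcal{L}$, so there exists a non-zero $J\in\mathcal{L}$ with $J\cap A=\{0\}$. Choose any faithful $\ast$-representation $\sigma:B/J\to\B(\Hil)$ (for instance, a universal representation) and let $\pi:B\to B/J$ be the quotient map. Setting $\rho=\sigma\circ\pi$, we have $\ker(\rho)=J\neq\{0\}$, so $\rho$ is unfaithful; and since $\sigma$ is faithful while $\ker(\pi|_A)=A\cap J=\{0\}$, the restriction $\rho|_A$ is faithful. But $\ker(\rho)=J\in\mathcal{L}$, contradicting the hypothesis. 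This completes the equivalence.
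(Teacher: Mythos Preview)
Your proposal is correct and follows essentially the same approach as the paper: both directions use the quotient representation $B\to B/J$ composed with a faithful representation of $B/J$ to pass between ideals and kernels, and the contradiction arguments are identical up to minor rephrasing (the paper proves the converse directly rather than by contrapositive, but the content is the same).
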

\begin{proof}
When $\mathcal{L}$ only contains the trivial ideal, the equivalence is a tautology.\
Hence, assume that $\mathcal{L}$ contains at least a non-zero ideal.

Suppose that $A$ detects ideals in $\mathcal{L}$, and pick a $\ast$-representation $\rho$ of $B$ whose restriction to $A$ is faithful and $\ker(\rho)\in\mathcal{L}$.\
Since $\ker(\rho)\cap A=\{0\}$, it must be that $\ker(\rho)=\{0\}$.

For the converse, pick a non-zero ideal $I\in\mathcal{L}$, and assume that every $\ast$-representation of $B$ that is faithful on $A$ and whose kernel is in $\mathcal{L}$ is faithful on $B$.\
Choose a faithful representation of $B/I$, and denote by $\rho$ its composition with the quotient map $B\to B/I$.\
We have that $\rho$ is not faithful and $\ker(\rho)=I\in\mathcal{L}$.\
Therefore, $\rho$ cannot be faithful on $A$, which implies that $I\cap A=\ker(\rho)\cap A\neq\{0\}$.
\end{proof}

\begin{notation}
Let $(B,\sigma,D)$ be a Harnisch--Kirchberg system.\
We denote by $\I(D_\sigma)^\sigma$ the sublattice of $\I(D_\sigma)$ of $\sigma$-invariant ideals, i.e., $I\in \I(D_{\sigma})$ such that $\sigma(I) = I$.
\end{notation}

\begin{lemma}\label{lem:detectssigma}
  Let $(B,\sigma,D)$ be a Harnisch--Kirchberg system.\
  Then $D$ detects ideals in $\I(D_\sigma)^\sigma$.
\end{lemma}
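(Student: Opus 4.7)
The plan is a four-step chase through the essentiality results already proven in Lemma~\ref{lem:ideals}, closing the loop with $\sigma$-invariance at the end. Let $J \in \I(D_\sigma)^\sigma$ be a non-zero $\sigma$-invariant ideal; the goal is to produce a non-zero element in $J \cap D$.

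First, I would apply Lemma~\ref{lem:ideals} with $\ell = 1$: it tells us that $D_{1,\infty}$ is an essential ideal of $D_\sigma$, so $J \cap D_{1,\infty} \ne \{0\}$. Next, since $D_{1,\infty} = \overline{\bigcup_{n \in \N} D_{1,n}}$ is the \Cs-inductive limit of the $D_{1,n}$ under the canonical inclusions, any non-zero ideal of $D_{1,\infty}$ (in particular $J \cap D_{1,\infty}$, viewed as an ideal of $D_{1,\infty}$) must meet some $D_{1,n}$ non-trivially; pick such an $n \ge 1$ with $J \cap D_{1,n} \ne \{0\}$. Now $J \cap D_{1,n}$ is a non-zero ideal of $D_{1,n}$, and Lemma~\ref{lem:ideals} again gives that $D_{n,n}$ is an essential ideal of $D_{1,n}$, hence $J \cap D_{n,n} \ne \{0\}$.

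The last step is where $\sigma$-invariance of $J$ enters. Since $D_{n,n} = \sigma^{n-1}(D)$, applying the automorphism $\sigma^{-(n-1)}$ yields
\begin{equation*}
\sigma^{-(n-1)}\bigl(J \cap \sigma^{n-1}(D)\bigr) = \sigma^{-(n-1)}(J) \cap D = J \cap D,
\end{equation*}
which is non-zero. This completes the argument.

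There is no serious obstacle here: the proof is essentially a telescoping chain of essentiality statements (from $D_\sigma$ down to $D_{1,\infty}$, then down to $D_{1,n}$, then down to $D_{n,n} = \sigma^{n-1}(D)$), followed by a translation by $\sigma^{-(n-1)}$. The only point that deserves care is the inductive-limit step, where one must observe that a non-zero ideal of a \Cs-inductive limit necessarily intersects some stage non-trivially; this is standard and follows from the fact that the union of the finite-stage images is dense. The $\sigma$-invariance hypothesis is used exactly once, in the final translation, and is precisely what allows us to move the witness back from stage $n$ to stage $1$.
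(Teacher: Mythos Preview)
Your argument is correct. Both your proof and the paper's ultimately rest on the essentiality statements of Lemma~\ref{lem:ideals} together with the $\sigma$-invariance of the ideal, but they are organised differently. The paper first passes through the representation-theoretic reformulation of Lemma~\ref{lem:representation}: it assumes a representation $\rho$ of $D_\sigma$ is faithful on $D$ with $\ker(\rho)$ $\sigma$-invariant, uses invariance to deduce that $\rho$ is faithful on every $D_{k,k}$, and then invokes essentiality of $D_{k,k}$ in $D_{-k,k}$ (plus the inductive limit $D_\sigma=\varinjlim D_{-k,k}$) to force $\rho$ faithful on $D_\sigma$. Your route is more direct: you stay at the ideal level, chain the essentiality of $D_{1,\infty}$ in $D_\sigma$ with that of $D_{n,n}$ in $D_{1,n}$, and only invoke $\sigma$-invariance at the very end to translate the witness from $D_{n,n}=\sigma^{n-1}(D)$ back to $D$. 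Your version has the advantage of bypassing Lemma~\ref{lem:representation} entirely; the paper's version makes the role of $\sigma$-invariance visible earlier and works with the symmetric inductive system $\{D_{-k,k}\}$ rather than the one-sided $\{D_{1,n}\}$.
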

\begin{proof}
We use the characterisation given in Lemma \ref{lem:representation}.\
Suppose that $\rho$ is a $\ast$-representation of $D_{\sigma}$ whose restriction to $D$ is faithful and $\ker(\rho)\in\I(D_\sigma)^{\sigma}$.\
Observe that we have the following identification,
\begin{equation*}
\sigma(\ker(\rho))=\{\sigma(x)\in D_{\sigma} \mid \rho(x)=0\}=\{x\in D_{\sigma} \mid \rho(\sigma^{-1}(x))=0\}=\ker(\rho \circ \sigma^{-1}).
\end{equation*}
Then, since $\ker(\rho)$ is $\sigma$-invariant, we have that $\ker(\rho)=\sigma^{-k}(\ker(\rho))=\ker(\rho \circ \sigma^{k})$ for all $k\in\Z$, and consequently $\rho$ is faithful on $D_{k,k}=\sigma^k(D)$ for all $k\in\Z$.\
Now, if $\ker(\rho)\cap D_{-k,k}\neq0$, then $\ker(\rho)\cap D_{-k,k}\cap D_{k,k}\neq0$ because $D_{k,k}$ is an essential ideal of $D_{-k,k}$ by Lemma \ref{lem:ideals}, which contradicts faithfulness of $\rho|_{D_{k,k}}$.\
Hence, it must be that $\ker(\rho)\cap D_{-k,k}=0$, and thus $\rho$ is faithful on $D_{\sigma}$.
\end{proof}

\begin{definition}[{cf.\ \cite[Definitions 2.8+2.15]{KM18}}]\label{def:aperiodic}
    Let $A$ be a \Cs-algebra.\
    One says that $\sigma\in\Aut(A)$ satisfies \textit{Kishimoto's condition} if for every $\e>0$, $x\in A$, and non-zero hereditary \Cs-subalgebra $H$ of $A$, there exists a positive, norm-one element $a\in H$ such that $\|ax\sigma(a)\|<\e$.
    An action $\alpha:\Gamma\curvearrowright A$ of a discrete group $\Gamma$ is said to be \textit{aperiodic} if $\alpha_g$ satisfies Kishimoto's condition for all $g\in\Gamma\setminus{\{1_{\Gamma}\}}$.
\end{definition}

\begin{remark}\label{rem:kishimoto}
When Kwa{\'s}niewski and Meyer introduced Definition \ref{def:aperiodic}, they were inspired by Kishimoto's original work on outer automorphisms \cite[Lemma 1.1]{Kis81}, thus the name ``Kishimoto's condition".
Moreover, it follows from \cite[Theorem 3.12]{GS14} (or \cite[Theorem 2.18]{KM18}) that if a discrete group action $\alpha:\Gamma\curvearrowright A$ is aperiodic, then $A$ detects ideals in the reduced crossed product $A\rtimes_{\alpha,r}\Gamma$.\
When $\Gamma=\Z$ or $\Gamma=\Z/p\Z$, where $p$ is a square-free number,\footnote{The integer $p$ is said to be square-free if no non-trivial perfect square divides $p$.} the converse is also true, and one can find a proof of this fact in \cite[Theorem 9.12]{KM18} when $A$ contains an essential ideal that either separable or type I, or in \cite[Corollary 9.6]{GU24} for the general case.\footnote{Note that in \cite{GU24}, detection of ideals is called ideal intersection property, and aperiodicity is called proper outerness.}\
Furthermore, note that if $\sigma\in\Aut(A)$, the associated action $\Z\curvearrowright A$ is aperiodic whenever for all $n>0$, the automorphism $\sigma^n$ satisfies Kishimoto's condition.\
In fact, if $\sigma^n$ with $n>0$ satisfies Kishimoto's condition, then so does $\sigma^{-n}$.
\end{remark}

\begin{lemma}\label{lem:aperiodic}
Let $(B,\sigma,D)$ be a Harnisch--Kirchberg system.\
Then the action $\Z\curvearrowright D_{\sigma}$ induced by $\sigma$ is aperiodic.\
In particular, $D_{\sigma}$ detects ideals in $D_{\sigma}\rtimes_{\sigma}\Z$.
\end{lemma}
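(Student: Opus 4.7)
The plan is to verify Kishimoto's condition for each automorphism $\sigma^n$ with $n \geq 1$; by Remark \ref{rem:kishimoto} this establishes aperiodicity of the induced $\Z$-action on $D_\sigma$, and the ``in particular'' conclusion about ideal detection in $D_\sigma\rtimes_\sigma\Z$ follows from the same remark.

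So I fix $n \geq 1$, $x \in D_\sigma$, $\varepsilon > 0$, and a non-zero hereditary subalgebra $H \subseteq D_\sigma$. The first step is a standard truncation: since $D_\sigma = \overline{\bigcup_m D_{-m,m}}$ by definition, I approximate $x$ within $\varepsilon/2$ by some $\tilde{x} \in D_{-k,k}$, reducing the task to producing a positive norm-one element $a \in H$ with $\|a\tilde{x}\sigma^n(a)\| < \varepsilon/2$.

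The second step exploits the essentiality of $D_{k+n+1,\infty}$ as an ideal of $D_\sigma$ from Lemma \ref{lem:ideals}. A general \Cs-algebraic fact implies that any essential ideal meets every non-zero hereditary subalgebra non-trivially: indeed, if $h \in H_+ \setminus \{0\}$ satisfied $h D_{k+n+1,\infty}h = \{0\}$, then $(hD_{k+n+1,\infty})(hD_{k+n+1,\infty})^* = \{0\}$ would force $h\cdot D_{k+n+1,\infty}=\{0\}$, so $h$ would lie in the annihilator of an essential ideal, contradicting $h \neq 0$. Hence $H \cap D_{k+n+1,\infty}$ is a non-zero hereditary \Cs-subalgebra, from which I extract a positive norm-one $a$.

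The final and most delicate step is to verify the norm estimate $\|a \tilde{x} \sigma^n(a)\| < \varepsilon/2$, which I expect to be the main obstacle. I will approximate $a$ by $a' \in D_{k+n+1, L}$ for some large $L$, decompose $\tilde{x}$ along the level structure of $D_{-k,k}$, and analyse each resulting term via the multiplicative rule $D_{k_1,\ell_1}D_{k_2,\ell_2} \subseteq D_{\max(k_1,k_2),\max(\ell_1,\ell_2)}$ of Lemma \ref{lem:properties2}, the modularity condition $D\sigma(D) = \sigma(D)$, and the orthogonality $D \cap \sigma^n(D) = \{0\}$ implied by Lemma \ref{lem:independence}. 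The crux is converting the resulting structural information---that $a' \tilde{x} \sigma^n(a')$ lies in a specific subalgebra $D_{k+n+1+n,L+n}$---into an actual small norm; my expectation is that the right refinement is to replace $a$ by $u^{1/2} a u^{1/2}$ for a suitable positive contraction $u$ drawn from an approximate unit of $D_{k+n+1,k+n+1}$ (which by Lemma \ref{lem:approxunit} serves as an approximate unit of $D_{k+n+1,\infty}$), so that the separation between the supports of $u$ and $\sigma^n(u)$ enforced by $D \cap \sigma^n(D) = \{0\}$ can be leveraged to obtain the required bound.
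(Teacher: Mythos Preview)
Your first two steps are fine, and in spirit they parallel the paper's approach: reduce to $x\in D_{-k,k}$, then push into a hereditary corner sitting inside a deep level $D_{m,\infty}$ using essentiality. The third step, however, has a genuine gap. The heuristic you invoke---``separation between the supports of $u$ and $\sigma^n(u)$ enforced by $D\cap\sigma^n(D)=\{0\}$''---misreads the structure. In a Harnisch--Kirchberg system the levels are \emph{nested}, not disjoint: one has $D\sigma^n(D)=\sigma^n(D)$, so products between $D$ and $\sigma^n(D)$ are never small. The condition $D\cap\sigma^n(D)=\{0\}$ is a linear-independence statement, not an orthogonality statement; it gives no norm control on products. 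Concretely, for your $a\in D_{k+n+1,\infty}$ you only get $a\tilde{x}\sigma^n(a)\in D_{k+2n+1,\infty}$, which says nothing about $\|a\tilde{x}\sigma^n(a)\|$, and sandwiching by approximate units $u\in D_{m,m}$ does not help because $u$ acts (approximately) as a unit on everything to its right. There is also a secondary issue: your proposed replacement $u^{1/2}au^{1/2}$ need not lie in $H$.

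The missing idea is the paper's ``difference of approximate units'' trick. One first reduces (via Rørdam-type functional calculus) to the case $H=\overline{cD_\sigma c}$ with $c\in D_{-i,i}$, and then picks a non-zero positive $h\in\overline{cD_{i,i}c}$. For an approximate unit $(e_\lambda)$ of $D_{i,i}$, set $E_\lambda=e_\lambda-\sigma(e_\lambda)$. Since both $e_\lambda$ and $\sigma(e_\lambda)$ are approximate units for $D_{i+1,\infty}$ (Lemma~\ref{lem:approxunit}), one has $E_\lambda y\to 0$ for every $y\in D_{i+1,\infty}$; in particular $E_\lambda h x\sigma^n(h)\to 0$ because $hx\sigma^n(h)\in D_{i+n,\infty}$. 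On the other hand $E_\lambda h$ cannot tend to zero, since that would force $h\in D_{i,i}\cap D_{i+1,i+1}=\{0\}$. The positive norm-one element $a=\|E_{\lambda_0}h\|^{-2}hE_{\lambda_0}^2h\in H$ then witnesses Kishimoto's condition. This is the mechanism that converts the structural information into an honest small norm, and it is precisely what your outline lacks.
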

\begin{proof}
By Remark \ref{rem:kishimoto}, it is sufficient to show that $\sigma^n$ satisfies Kishimoto's condition for all $n>0$.\
We first show that this follows from assuming that Kishimoto's condition holds for hereditary subalgebras of the form $\overline{cD_{\sigma}c}$ for $c\in D_{-i,i}$ and $i\in\N$.\
Fix $\varepsilon>0$, $n>0$, $x\in D_{\sigma}$, and a non-zero hereditary \Cs-subalgebra $H\subseteq D_\sigma$.\
Pick a positive element $b\in H$ with norm one, and a new element $b_i\in D_{-i,i}$ such that $b^2=_{\delta}b_i$ for some $i\in\N$, and $0<\delta<1$.\
It follows from \cite[Proposition 2.2+Lemma 2.3(a)]{Ror92} that there exists a positive, norm-one element $d\in D_{-i,i}$, obtained via functional calculus from $b_i$ and a nonnegative continuous function on the unit interval taking value $1$ between $2\delta$ and $1$, and $r\in D_{\sigma}$ such that $d=r^*br$.\
Picking a suitable nonnegative continuous function on the unit interval with support in $[2\delta,1]$, one can use functional calculus (applied to $b_i$) to get a positive, norm-one element $c\in D_{-i,i}$ such that $dc=c=cd$.\
Set $s=\sqrt{b}r$, and note that $s^*s=d$.\
Define a $\ast$-homomorphism $\psi:\overline{cD_{\sigma}c} \to \overline{bD_{\sigma}b}\subseteq H$ by $\psi(a)=sas^*$.\
Note that $\psi$ is isometric as $\|a\|=\|dad\|=\|s^*\psi(a)s\|\leq\|\psi(a)\|$, and set $x'=s^*x\sigma^n(s)$.\
By assumption, we can find a positive, norm-one element $a\in\overline{cD_{\sigma}c}$ such that $\|ax'\sigma^n(a)\|<\e$.\
Hence, $\|\psi(a)x\sigma^n(\psi(a))\|\leq\|as^*x\sigma^n(sa)\|=\|ax'\sigma^n(a)\|<\e$, which implies that Kishimoto's condition holds in the general situation as $\psi(a)\in\overline{bD_{\sigma}b}\subseteq H$.

It is therefore sufficient to prove that $\sigma^n$ satisfies Kishimoto's condition under the additional assumption that $H=\overline{cD_{\sigma}c}$, where $c\in D_{-i,i}$ for some $i\in\N$.\
So fix such an $H$ and $\e>0$, $n>0$, $x\in D_{\sigma}$.\
We may moreover assume that $x$ is an element of $D_{-k,k}$ for some $k\in\N$, and then by an approximation argument get the conclusion for all elements of $D_{\sigma}$.\
Note that $\overline{cD_{i,i}c}\neq\{0\}$ as $D_{i,i}$ is an essential ideal in $D_{-i,i}$ by Lemma \ref{lem:ideals}.\
Hence, we may find a non-zero, positive element $h\in\overline{cD_{i,i}c}\subseteq D_{i,i}$.\
Pick an approximate unit of positive contractions $\{e_\lambda\}_{\lambda}\subseteq D_{i,i}$ for $D_{i,i}$, and let $E_\lambda\in D_{i,i+1}$ be given by $E_\lambda=e_\lambda-\sigma(e_\lambda)$ for all $\lambda$.\
Note that by Lemma \ref{lem:approxunit} $\{e_\lambda\}_{\lambda}$ and $\{\sigma(e_\lambda)\}_{\lambda}$ are approximate units for $D_{i,\infty}$ and $D_{i+1,\infty}$, respectively, and hence it follows that for any element $y\in D_{i+1,\infty}$, the net $\{E_\lambda y\}_{\lambda}$ converges to zero.\
On the other hand, the net $\{E_\lambda h\}_{\lambda} \subseteq D_{i,i+1}$ cannot converge to zero for if $h=\lim_{\lambda}e_\lambda h=\lim_{\lambda}\sigma(e_\lambda) h \in D_{i,i}\cap D_{i+1,i+1}$, then $h=0$, a contradiction.\
Then, after passing to a subnet, we may assume that there exists $\delta>0$ such that $\|E_\lambda h\|\geq\delta$ for all $\lambda$.\
Now, $hx\sigma^n(h)$ belongs to $D_{i+n,\infty}$ by Lemmas \ref{lem:properties1} and \ref{lem:properties2}.\
Hence, we may find an index $\lambda_0$ for which $\|E_{\lambda_0}hx\sigma^n(h)\| < \frac{\e\delta^2}{2}$.\
Finally, we claim that $a\in H_+$ witnesses that Kishimoto's condition is satisfied, where
\begin{equation*} a=\frac{1}{\|E_{\lambda_0}h\|^2}(E_{\lambda_0}h)^*E_{\lambda_0}h=\frac{1}{\|E_{\lambda_0}h\|^2}hE_{\lambda_0}^2h.
\end{equation*}
Let us show the claim:
\begin{align*}
\|ax\sigma^n(a)\|&=\frac{1}{\|E_{\lambda_0}h\|^4}\|hE_{\lambda_0}^2h x \sigma^n({hE_{\lambda_0}^2h})\| \\
&\leq \frac{2}{\|E_{\lambda_0}h\|^2}\|E_{\lambda_0}h x \sigma^n(h)\| \\
&< \frac{\e\delta^2}{\|E_{\lambda_0}h\|^2}\leq \e.
\end{align*}
\end{proof}

To clarify notation in the proof of the following corollary, we should mention that the multiplier algebra of a \Cs-algebra $A$ is denoted by $\M(A)$.

\begin{corollary}[{cf.\ \cite[Proposition 4.5]{HK24}}]\label{cor:detectionD}
Let $(B,\sigma,D)$ be a Harnisch--Kirchberg system.\
Then $D$ detects ideals in $D_{\sigma}\rtimes_{\sigma}\Z$.
\end{corollary}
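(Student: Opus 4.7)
The plan is to compose the two detection results already established: Lemma \ref{lem:aperiodic} tells us that $D_{\sigma}$ detects ideals in $D_{\sigma}\rtimes_{\sigma}\Z$, and Lemma \ref{lem:detectssigma} tells us that $D$ detects $\sigma$-invariant ideals in $D_{\sigma}$. So, given any non-zero ideal $J\subseteq D_{\sigma}\rtimes_{\sigma}\Z$, I want to show first that $J\cap D_{\sigma}$ is non-zero and $\sigma$-invariant, and then deduce $J\cap D = (J\cap D_{\sigma})\cap D\neq\{0\}$ from Lemma \ref{lem:detectssigma}.

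First I would note that since $\Z$ is amenable, the reduced and full crossed products coincide, so Lemma \ref{lem:aperiodic} applies to $D_{\sigma}\rtimes_{\sigma}\Z$ as written. Thus for $J\in\I(D_{\sigma}\rtimes_{\sigma}\Z)$ non-zero, we immediately get $J\cap D_{\sigma}\neq\{0\}$.

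The key observation is that $J\cap D_{\sigma}$ is $\sigma$-invariant. Indeed, let $u\in\M(D_{\sigma}\rtimes_{\sigma}\Z)$ denote the canonical unitary implementing $\sigma$, so that $\sigma(d)=udu^{*}$ for all $d\in D_{\sigma}$. Since $J$ is an ideal of $D_{\sigma}\rtimes_{\sigma}\Z$, we have $uJu^{*}=J$, and consequently
\begin{equation*}
\sigma(J\cap D_{\sigma}) = u(J\cap D_{\sigma})u^{*} = uJu^{*}\cap uD_{\sigma}u^{*} = J\cap D_{\sigma},
\end{equation*}
showing $J\cap D_{\sigma}\in\I(D_{\sigma})^{\sigma}$.

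Applying Lemma \ref{lem:detectssigma} to this non-zero $\sigma$-invariant ideal yields $(J\cap D_{\sigma})\cap D\neq\{0\}$, and hence $J\cap D\neq\{0\}$, which is exactly what is needed. There is no substantial obstacle here: once the two detection lemmas are in hand, the proof is essentially a one-line composition, the only subtlety being the observation that intersection with $D_{\sigma}$ sends ideals of the crossed product to $\sigma$-invariant ideals, which is forced by the presence of the implementing unitary in the multiplier algebra.
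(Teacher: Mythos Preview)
Your proof is correct and essentially identical to the paper's: both apply Lemma~\ref{lem:aperiodic} to obtain $J\cap D_{\sigma}\neq\{0\}$, observe that this intersection is $\sigma$-invariant via the canonical implementing unitary, and then conclude with Lemma~\ref{lem:detectssigma}. Your remark about amenability of $\Z$ (reduced versus full crossed product) is a harmless extra comment that the paper leaves implicit.
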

\begin{proof}
It follows from Lemma \ref{lem:aperiodic} that $D_{\sigma}$ detects ideals in $D_{\sigma}\rtimes_{\sigma}\Z$.\
Hence, for any non-zero ideal $I$ in $D_{\sigma}\rtimes_{\sigma}\Z$, the intersection $I\cap D_\sigma$ is non-zero.\
Moreover, $I\cap D_\sigma$ is a $\sigma$-invariant ideal of $D_{\sigma}$ because $\sigma(I\cap D_{\sigma})=\sigma(I)\cap D_{\sigma}=I\cap D_{\sigma}$, where the last identity follows from the fact that $\sigma(I)=uIu^*=I$, where $u\in \M(D_{\sigma}\rtimes_{\sigma}\Z)$ is the canonical unitary implementing $\sigma$.\
Consequently, we have that $I\cap D$ is non-zero as well by Lemma \ref{lem:detectssigma}, and thus $D$ detects ideals in $D_{\sigma}\rtimes_{\sigma}\Z$.
\end{proof}

\begin{definition}[{cf.\ \cite[Definition 4.14]{HK24}}]\label{def:semiinv}
Let $(B,\sigma,D)$ be a Harnisch--Kirchberg system.\
A closed two-sided ideal $I$ of $D$ is said to be
\begin{itemize}
\item \emph{semi-invariant} if $I\sigma(D) \subseteq \sigma(I)$,
\item \textit{cancellative} if for all $a,b\in D$ the condition $(a+\sigma(b))\sigma(D)\subseteq\sigma(I)$ implies that $a\in I$.
\end{itemize}
The set of all semi-invariant, cancellative ideals of $D$ is denoted by $\I(D)^{\rm{sic}}$.
\end{definition}

Note that the condition $I\sigma(D)\subseteq\sigma(I)$ is equivalent to saying that $I\sigma^n(D)\subseteq\sigma^n(I)$ for all $n\in\N$.
This follows from the fact that $D\sigma(D)=\sigma(D)$.

\begin{remark}\label{rem:semiinv}
Every $\sigma$-invariant ideal $I$ of $D_\sigma$, induces a semi-invariant ideal $I\cap D$ of $D$ because $(I\cap D)\sigma(D)=(\sigma(I)\cap D)\sigma(D)\subseteq\sigma(I\cap D)$.

Note that when $I\in\I(D)$ is a semi-invariant ideal, we have that $I_{k,\ell}$, $I_{-\infty,\ell}$, $I_{k,\infty}$, and $I_{\sigma}$ are \Cs-algebras for all $k, \ell\in\Z$ with $k\leq\ell$ by Lemma \ref{lem:subalgebra}.\
Moreover, they are ideals of $D_{k,\ell}$, $D_{-\infty,\ell}$, $D_{k,\infty}$ and $D_\sigma$, respectively.\
To see this, we show that $\sigma^i(I)\sigma^j(D)\subseteq\sigma^{\max(i,j)}(I)$ for all $i,j\in\Z$.
Pick $i,j\in\Z$ such that $i\geq j$,
\begin{align*}
  \sigma^i(I)\sigma^j(D) & =\sigma^j(\sigma^{i-j}(I)D) =\sigma^j(\sigma^{i-j}(ID)D) \\
  & \subseteq \sigma^j(\sigma^{i-j}(ID)) =\sigma^i(I).
\end{align*}
While if $i\leq j$ we have
\begin{equation*}
  \sigma^i(I)\sigma^j(D) =\sigma^i(I\sigma^{j-i}(D)) \subseteq \sigma^i(\sigma^{j-i}(I))=\sigma^j(I),
\end{equation*}
where we used that $I\sigma^n(D)\subseteq\sigma^n(I)$ for all $n\in\N$.
Together with the fact that $\sigma^i(D)\sigma^j(I)=(\sigma^j(I)\sigma^i(D))^*$, we have shown that $I_{k,\ell}$ is a closed two-sided ideal in $D_{k,\ell}$ for all $k\leq\ell$, and by the inductive limit decomposition the same holds for $I_{-\infty,\ell}$, $I_{k,\infty}$ and $I_\sigma$.\
Note, moreover, that $I_{\sigma}$ is clearly $\sigma$-invariant.
\end{remark}

\begin{lemma}[{cf.\ \cite[Lemma 4.15(v)]{HK24}}]\label{lem:intersection}
Let $(B,\sigma,D)$ be a Harnisch--Kirchberg system.\
Every semi-invariant ideal $J\in\I(D)$ satisfies $J_\sigma\cap D=J$.
\end{lemma}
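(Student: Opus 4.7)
The plan is to prove the non-trivial inclusion $J_\sigma \cap D \subseteq J$ in two stages. First I would establish the finite-level identity $D \cap J_{-n,n} = J$ for every $n \geq 1$, and then promote it to $J_\sigma$ using that an injective $\ast$-homomorphism of \Cs-algebras is automatically isometric. The reverse inclusion $J \subseteq J_\sigma \cap D$ is immediate from $J = \sigma^0(J) \subseteq J_{-n,n} \subseteq J_\sigma$ (for $n\geq 1$) and $J \subseteq D$.

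For the finite-stage identity, pick $x \in D \cap J_{-n,n}$ and write $x = \sum_{k=-n-1}^{n-1}\sigma^k(j_k)$ with $j_k \in J$. Isolating the $k = 0$ summand gives $x - j_0 \in D_{-n,0} + D_{2,n}$, while at the same time $x - j_0 \in D = D_{1,1}$. The core computation is to verify that $D_{1,1} \cap (D_{-n,0} + D_{2,n}) = \{0\}$ via two uses of Lemma \ref{lem:independence}: if $d = a + b$ with $d \in D_{1,1}$, $a \in D_{-n,0}$, $b \in D_{2,n}$, then $a = d - b \in D_{1,n}$ forces $a \in D_{-n,0} \cap D_{1,n} = \{0\}$, and consequently $d = b \in D_{1,1} \cap D_{2,n} = \{0\}$. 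This yields $x = j_0 \in J$ (with the case $n=1$ being strictly easier).

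Next, semi-invariance of $J$ together with Remark \ref{rem:semiinv} guarantees that $J_{-n,n}$ is a closed two-sided ideal of $D_{-n,n}$. The composition of the inclusion $D \hookrightarrow D_{-n,n}$ with the quotient map $D_{-n,n} \to D_{-n,n}/J_{-n,n}$ has kernel $D \cap J_{-n,n} = J$, and therefore descends to an injective, and hence isometric, $\ast$-homomorphism $D/J \hookrightarrow D_{-n,n}/J_{-n,n}$.

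To finish, fix $x \in J_\sigma \cap D$ and $\varepsilon > 0$. Since $J_\sigma = \overline{\bigcup_n J_{-n,n}}$, one can find $n \geq 1$ and $y \in J_{-n,n}$ with $\|x - y\| < \varepsilon$. The isometric embedding from the previous step gives
\[
\|x + J\|_{D/J} = \|x + J_{-n,n}\|_{D_{-n,n}/J_{-n,n}} \leq \|x - y\| < \varepsilon,
\]
and since $\varepsilon$ was arbitrary one concludes $x \in J$. I do not expect a serious obstacle in this plan; the crucial point is that the finite-level identity lifts to the closure \emph{for free}, because the induced $\ast$-homomorphism between quotient \Cs-algebras is automatically isometric.
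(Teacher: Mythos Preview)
Your proof is correct and follows essentially the same approach as the paper's. The only cosmetic difference is in the finite-level argument: the paper first splits $x\in J_{-k,k}\cap D$ as $y+z$ with $y\in J_{-k,0}$, $z\in J_{1,k}$ and uses Lemma~\ref{lem:independence} to kill $y$, then splits $z=v+w$ with $v\in J$, $w\in J_{2,k}$ and kills $w$; you instead isolate the $\sigma^0$-summand directly and show the remainder lies in $D_{1,1}\cap(D_{-n,0}+D_{2,n})=\{0\}$, which amounts to the same two applications of Lemma~\ref{lem:independence} in a slightly different order. The passage to $J_\sigma$ via the isometric embedding $D/J\hookrightarrow D_{-n,n}/J_{-n,n}$ is exactly the paper's argument, just phrased with an explicit $\varepsilon$ rather than as a limit along the inductive system.
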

\begin{proof}
It is evident that $J\subseteq J_\sigma\cap D$.
We want to show the opposite inclusion.\

We start by showing that $J_{-k,k} \cap D=J$ for all $k\geq 1$.\
Fix an element $x\in J_{-k,k} \cap D$, and find $y\in J_{-k,0}$, $z\in J_{1,k}$ such that $x=y+z$.\
In particular, it follows that $y=x-z$ belongs to $D_{1,k}$, but $J_{-k,0}\cap D_{1,k}=\{0\}$ by Lemma \ref{lem:independence}, and hence $y=0$ and $x=z\in J_{1,k}\cap D$.\
Now, using the same trick, we find $v\in J$ and $w\in J_{2,k}$ such that $x=v+w$, which gives that $w=(x-v) \in D$, but $D\cap J_{2,k}=\{0\}$ by Lemma \ref{lem:independence} and therefore $w=0$ and $x=v\in J$.

Consider now the quotient \Cs-algebra $D_\sigma/J_\sigma$ as the direct limit of the sequence $\{D_{-k,k}/J_{-k,k}\}_{k>1}$ with canonical inclusions.\
If $x\in J_\sigma \cap D$, then $\|x+J_\sigma\|=0$, and
\begin{equation*}
  \|x+J\|=\lim_{k\to\infty}\|x+ J_{-k,k}\cap D\|\leq\lim_{k\to\infty}\|x+ J_{-k,k}\|=\|x+J_\sigma\|=0,
\end{equation*}
which shows that $x\in J$, and finishes our proof.
\end{proof}

\begin{notation}\label{not:newsystem}
Let $(B,\sigma,D)$ be a Harnisch--Kirchberg system, and $J$ a semi-invariant ideal of $D$.\
Then we denote by $\pi_{\scaleto{J}{4pt}}:D_{\sigma}\to D_{\sigma}/J_{\sigma}$ the quotient map, and use the notation $ B_{\scaleto{J}{4pt}} =D_{\sigma}/J_{\sigma}$, $ {D_{\scaleto{J}{4pt}}}=\pi_{\scaleto{J}{4pt}}(D)$.\
Moreover, we denote by $\sigma_{\scaleto{J}{4pt}}$ the automorphism of $ B_{\scaleto{J}{4pt}} $ given by $\sigma_{\scaleto{J}{4pt}} \circ \pi_{\scaleto{J}{4pt}} = \pi_{\scaleto{J}{4pt}} \circ \sigma$.
\end{notation}

\begin{lemma}[{cf.\ \cite[Lemma 4.15(iii)+(iv)]{HK24}}]\label{lem:newsystem}
Let $(B,\sigma,D)$ be a Harnisch--Kirchberg system, and $J\in\I(D)^{\mathrm{sic}}$.\
Then the triple $( B_{\scaleto{J}{4pt}} ,\sigma_{\scaleto{J}{4pt}}, {D_{\scaleto{J}{4pt}}})$ is a Harnisch--Kirchberg system.
\end{lemma}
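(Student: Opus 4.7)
The plan is to verify the three conditions of Definition \ref{def:hk-system} for the triple $( B_{\scaleto{J}{4pt}} , \sigma_{\scaleto{J}{4pt}},  {D_{\scaleto{J}{4pt}}})$. The crucial preliminary identity I would establish first is
\[
J_\sigma \cap \sigma(D) = \sigma(J).
\]
This follows by applying $\sigma$ to Lemma \ref{lem:intersection} and using $\sigma$-invariance of $J_\sigma$ from Remark \ref{rem:semiinv}:
\[
\sigma(J) = \sigma(J_\sigma \cap D) = \sigma(J_\sigma) \cap \sigma(D) = J_\sigma \cap \sigma(D).
\]

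Condition (i) is essentially bookkeeping. Applying the surjective $\ast$-homomorphism $\pi_{\scaleto{J}{4pt}}$ to $D\sigma(D) = \sigma(D)$ and using the intertwining $\pi_{\scaleto{J}{4pt}} \circ \sigma = \sigma_{\scaleto{J}{4pt}} \circ \pi_{\scaleto{J}{4pt}}$ yields $ {D_{\scaleto{J}{4pt}}}\sigma_{\scaleto{J}{4pt}}( {D_{\scaleto{J}{4pt}}}) = \sigma_{\scaleto{J}{4pt}}( {D_{\scaleto{J}{4pt}}})$.

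For condition (ii), I would take $x, y \in D$ with $\pi_{\scaleto{J}{4pt}}(x) = \pi_{\scaleto{J}{4pt}}(\sigma(y))$, so $x - \sigma(y) \in J_\sigma$. Since $J_\sigma$ is an ideal of $D_\sigma$ and $\sigma(D) \subseteq D_\sigma$, we get $(x - \sigma(y))\sigma(D) \subseteq J_\sigma$. Simultaneously, $x\sigma(D) \subseteq D\sigma(D) = \sigma(D)$ and $\sigma(y)\sigma(D) = \sigma(yD) \subseteq \sigma(D)$, so $(x - \sigma(y))\sigma(D) \subseteq \sigma(D)$. Combining, $(x - \sigma(y))\sigma(D) \subseteq J_\sigma \cap \sigma(D) = \sigma(J)$, and cancellativity of $J$ applied with $a = x$ and $b = -y$ forces $x \in J$, i.e., $\pi_{\scaleto{J}{4pt}}(x) = 0$. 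Condition (iii) is handled by the same argument with $y = 0$: if $\pi_{\scaleto{J}{4pt}}(x)$ annihilates $\sigma_{\scaleto{J}{4pt}}( {D_{\scaleto{J}{4pt}}})$ (even just on one side), then $x\sigma(D) \subseteq J_\sigma \cap \sigma(D) = \sigma(J)$, and cancellativity with $b = 0$ gives $x \in J$, hence $\pi_{\scaleto{J}{4pt}}(x) = 0$.

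I foresee no substantive obstacle: the semi-invariance and cancellativity conditions in Definition \ref{def:semiinv} appear to be engineered precisely so that they descend to the quotient, and once the key identity $J_\sigma \cap \sigma(D) = \sigma(J)$ is isolated the rest is mechanical.
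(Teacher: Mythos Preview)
Your proof is correct and follows essentially the same approach as the paper: both hinge on the identity $J_\sigma \cap \sigma(D) = \sigma(J)$ from Lemma~\ref{lem:intersection} together with cancellativity of $J$. The only organizational difference is that the paper verifies condition~(iii) for a general element $\pi_{\scaleto{J}{4pt}}(a) + \sigma_{\scaleto{J}{4pt}}(\pi_{\scaleto{J}{4pt}}(b))$ of $D_{\scaleto{J}{4pt}} + \sigma_{\scaleto{J}{4pt}}(D_{\scaleto{J}{4pt}})$, thereby absorbing condition~(ii) as the special case where this element vanishes, whereas you treat (ii) and (iii) separately with the same mechanism.
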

\begin{proof}
Note that $ {D_{\scaleto{J}{4pt}}}\sigma_{\scaleto{J}{4pt}}( {D_{\scaleto{J}{4pt}}})=\pi_{\scaleto{J}{4pt}}(D\sigma(D))= {D_{\scaleto{J}{4pt}}}$.
Moreover, $ {D_{\scaleto{J}{4pt}}}\cap\sigma_{\scaleto{J}{4pt}}( {D_{\scaleto{J}{4pt}}})=\pi_{\scaleto{J}{4pt}}(D\cap\sigma(D))=\{0\}$ because $D\cap\sigma(D)=\{0\}$.
We want to that show ${D_{\scaleto{J}{4pt}}}\cap\sigma_{\scaleto{J}{4pt}}( {D_{\scaleto{J}{4pt}}})^{\perp}=\{0\}$.\
Hence, pick $a,b\in D$, and suppose that
\begin{equation*}
(\pi_{\scaleto{J}{4pt}}(a)+\sigma_{\scaleto{J}{4pt}}(\pi_{\scaleto{J}{4pt}}(b)))\sigma_{\scaleto{J}{4pt}}( {D_{\scaleto{J}{4pt}}})=\{0\}.
\end{equation*}
Since we have that $\pi_{\scaleto{J}{4pt}}(J_\sigma)=\{0\}$, it follows that
\begin{equation*}
(a+\sigma(b))\sigma(D) \subseteq J_\sigma\cap\sigma(D)=\sigma(J),
\end{equation*}
where we used Lemma \ref{lem:intersection} for the last identification.
By assumption $J$ is cancellative, and therefore $a\in J$, which means that $\pi_{\scaleto{J}{4pt}}(a)=0$.
As a result, we have that $\pi_{\scaleto{J}{4pt}}(b)\pi_{\scaleto{J}{4pt}}(D)=\{0\}$, which means that $\pi_{\scaleto{J}{4pt}}(b)=0$ as well, and therefore $( B_{\scaleto{J}{4pt}} ,\sigma_{\scaleto{J}{4pt}}, {D_{\scaleto{J}{4pt}}})$ is a Harnisch--Kirchberg system.
\end{proof}

\begin{proposition}[{cf.\ \cite[Proposition 4.16]{HK24}}]\label{prop:surjective}
Let $(B,\sigma,D)$ be a Harnisch--Kirchberg system.\
If $I$ is an ideal of $D_\sigma\rtimes_{\sigma}\Z$ such that $I\cap D$ is cancellative, then $I$ is the natural image of $(I\cap D)_\sigma\rtimes_{\sigma}\Z$ in $D_\sigma\rtimes_{\sigma}\Z$.
\end{proposition}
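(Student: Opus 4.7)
Set $J := I \cap D$. My plan is to show that $I$ coincides with the ideal $J_\sigma \rtimes_\sigma \Z$ sitting naturally inside $D_\sigma \rtimes_\sigma \Z$ by establishing the two inclusions separately. As a preliminary step, I would first verify that $J$ is semi-invariant: since $I$ is an ideal of the crossed product it is fixed by $\sigma$ (conjugation by the canonical implementing unitary), so $I \cap D_\sigma$ is a $\sigma$-invariant ideal of $D_\sigma$, and Remark \ref{rem:semiinv} applied to $(I \cap D_\sigma) \cap D = J$ gives semi-invariance. Together with the cancellativity hypothesis, this places $J$ in $\I(D)^{\mathrm{sic}}$, so Lemma \ref{lem:newsystem} yields a Harnisch--Kirchberg system $( B_{\scaleto{J}{4pt}} , \sigma_{\scaleto{J}{4pt}},  {D_{\scaleto{J}{4pt}}})$. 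Note that $( {D_{\scaleto{J}{4pt}}})_{\sigma_{\scaleto{J}{4pt}}} = \pi_{\scaleto{J}{4pt}}(D_\sigma) =  B_{\scaleto{J}{4pt}} $, and that $\pi_{\scaleto{J}{4pt}}$ extends to a surjective $\ast$-homomorphism $\tilde{\pi}_{\scaleto{J}{4pt}} : D_\sigma \rtimes_\sigma \Z \to  B_{\scaleto{J}{4pt}}  \rtimes_{\sigma_{\scaleto{J}{4pt}}} \Z$ whose kernel equals the natural image of $J_\sigma \rtimes_\sigma \Z$, by amenability of $\Z$.

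The inclusion $J_\sigma \rtimes_\sigma \Z \subseteq I$ should be the easy direction: because $\sigma^n(I) = I$ for every $n \in \Z$, we have $\sigma^n(J) = I \cap \sigma^n(D) \subseteq I$, so each $J_{-n,n} \subseteq I$ and hence $J_\sigma \subseteq I$ after passing to the closure. Since $J_\sigma$ is $\sigma$-invariant, the ideal it generates in $D_\sigma \rtimes_\sigma \Z$ is exactly the natural image of $J_\sigma \rtimes_\sigma \Z$, which therefore sits inside $I$.

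For the reverse inclusion my plan is to show $\tilde{\pi}_{\scaleto{J}{4pt}}(I) = 0$ by contradiction, and this is the step where the theory developed in Section 3 really does the work. Assume otherwise, so that $\tilde{\pi}_{\scaleto{J}{4pt}}(I)$ is a non-zero ideal of $ B_{\scaleto{J}{4pt}}  \rtimes_{\sigma_{\scaleto{J}{4pt}}} \Z$. Corollary \ref{cor:detectionD} applied to the quotient Harnisch--Kirchberg system then forces $\tilde{\pi}_{\scaleto{J}{4pt}}(I) \cap  {D_{\scaleto{J}{4pt}}} \neq 0$, so I can choose $x \in I$ and $d \in D$ with $\tilde{\pi}_{\scaleto{J}{4pt}}(x) = \pi_{\scaleto{J}{4pt}}(d) \neq 0$. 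Viewing $d$ inside $D_\sigma \rtimes_\sigma \Z$, the element $x - d$ lies in $\ker(\tilde{\pi}_{\scaleto{J}{4pt}}) = J_\sigma \rtimes_\sigma \Z \subseteq I$ by the previous paragraph, so that $d \in I \cap D = J$, which forces $\pi_{\scaleto{J}{4pt}}(d) = 0$ and contradicts the choice of $d$. I expect this final step to be the main obstacle: the entire purpose of the cancellativity hypothesis is to make Lemma \ref{lem:newsystem} available, so that Corollary \ref{cor:detectionD} can be invoked on the quotient system; without cancellativity the detection-of-ideals property would not be guaranteed to survive passage to the quotient and the contradiction would collapse.
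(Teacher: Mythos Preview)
Your proposal is correct and follows essentially the same route as the paper: establish that $J=I\cap D$ is semi-invariant via Remark~\ref{rem:semiinv}, invoke Lemma~\ref{lem:newsystem} to pass to the quotient Harnisch--Kirchberg system, use the easy inclusion $J_\sigma\rtimes_\sigma\Z\subseteq I$, and then apply Corollary~\ref{cor:detectionD} on the quotient to get the reverse inclusion. The only cosmetic difference is that the paper states directly that $\tilde{\pi}_{\scaleto{J}{4pt}}(I)\cap D_{\scaleto{J}{4pt}}=\pi_{\scaleto{J}{4pt}}(J)=\{0\}$ and concludes $\tilde{\pi}_{\scaleto{J}{4pt}}(I)=\{0\}$, whereas you unpack this identity via a contradiction; your argument is exactly the computation that justifies the paper's claimed equality.
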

\begin{proof}
We know that $I\cap D_\sigma$ is a $\sigma$-invariant ideal of $D_\sigma$, and therefore $I\cap D$ is a semi-invariant ideal of $D$ by Remark \ref{rem:semiinv}.
Let us denote $I\cap D$ by $J$ for the rest of the proof.
By Lemma \ref{lem:newsystem}, the triple $( B_{\scaleto{J}{4pt}} ,\sigma_{\scaleto{J}{4pt}}, {D_{\scaleto{J}{4pt}}})$ associated to the ideal $J$ as in Notation \ref{not:newsystem} is a Harnisch--Kirchberg system.

The kernel of the canonical surjection
\begin{equation*}
\pi_{\scaleto{J}{4pt}}\rtimes\Z:D_\sigma\rtimes_{\sigma}\Z \to  (D_{\scaleto{J}{4pt}})_{\sigma_{\scaleto{J}{4pt}}}\rtimes_{\sigma_{\scaleto{J}{4pt}}}\Z, \quad \pi_{\scaleto{J}{4pt}}\rtimes\Z|_{D_\sigma}=\pi_{\scaleto{J}{4pt}}
\end{equation*}
is the natural image of $J_\sigma\rtimes_{\sigma}\Z$ in $D_\sigma\rtimes_{\sigma}\Z$, and it is contained in $I$ because $J_\sigma=(I\cap D)_\sigma\subseteq I$.
We want to show that the other inclusion holds as well, i.e., $I\subseteq J_\sigma\rtimes_{\sigma}\Z$.
Consider ${I}_{\scaleto{J}{4pt}}=(\pi_{\scaleto{J}{4pt}}\rtimes\Z)(I)$, which is an ideal of $ (D_{\scaleto{J}{4pt}})_{\sigma_{\scaleto{J}{4pt}}}\rtimes_{\sigma_{\scaleto{J}{4pt}}}\Z$ that satisfies ${I}_{\scaleto{J}{4pt}}\cap {D_{\scaleto{J}{4pt}}}=\pi_{\scaleto{J}{4pt}}(J)=\{0\}$.
By Corollary \ref{cor:detectionD} it follows that ${I}_{\scaleto{J}{4pt}}=\{0\}$, and thus $I\subseteq J_\sigma\rtimes_{\sigma}\Z$.
\end{proof}

The theorem that follows establishes that, under suitable assumptions, the set of semi-invariant, cancellative ideals of a Harnisch--Kirchberg system corresponds to the ideal lattice of a crossed product \Cs-algebra arising from the same Harnisch--Kirchberg system.

\begin{theorem}[{cf.\ \cite[Corollary 4.18]{HK24}}]\label{thm:orderisomorphism}
Let $(B,\sigma,D)$ be a Harnisch--Kirchberg system, and assume that every semi-invariant ideal of $D$ has cancellation.\
Then the order morphisms $\Delta:\I(D)^{\mathrm{sic}}\to D_{\sigma}\rtimes_{\sigma}\Z$ and $\nabla:D_{\sigma}\rtimes_{\sigma}\Z\to\I(D)^{\mathrm{sic}}$ given by
\begin{align*}
 \Delta(J)=J_{\sigma}\rtimes_{\sigma}\Z, \quad \nabla(I)=I\cap D
\end{align*}
for all $J\in\I(D)^{\mathrm{sic}}$ and $I\in D_{\sigma}\rtimes_{\sigma}\Z$, are mutually inverse order isomorphisms.
\end{theorem}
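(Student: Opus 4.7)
The plan is to verify well-definedness, order-preservation, and the two inverse identities, with the heavy lifting already done by Lemma \ref{lem:intersection} and Proposition \ref{prop:surjective}.

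First I would confirm that both maps are well-defined. For $\Delta$: if $J \in \I(D)^{\mathrm{sic}}$, Remark \ref{rem:semiinv} shows that $J_\sigma$ is a $\sigma$-invariant ideal of $D_\sigma$, so $J_\sigma \rtimes_\sigma \Z$ is a genuine ideal of $D_\sigma \rtimes_\sigma \Z$. For $\nabla$: given an ideal $I$ of $D_\sigma \rtimes_\sigma \Z$, the intersection $I \cap D_\sigma$ is $\sigma$-invariant (since the canonical unitary $u \in \M(D_\sigma \rtimes_\sigma \Z)$ implements $\sigma$ and satisfies $uIu^* = I$), so $I \cap D = (I \cap D_\sigma) \cap D$ is semi-invariant by Remark \ref{rem:semiinv}; cancellativity of $I \cap D$ is precisely the standing hypothesis of the theorem, so $\nabla(I) \in \I(D)^{\mathrm{sic}}$. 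Both maps are manifestly monotone for inclusion.

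The two inverse relations then follow immediately from the preceding section. Given $J \in \I(D)^{\mathrm{sic}}$, one has
\[ \nabla(\Delta(J)) \;=\; (J_\sigma \rtimes_\sigma \Z) \cap D \;=\; J_\sigma \cap D \;=\; J, \]
where the last equality is exactly Lemma \ref{lem:intersection}. Conversely, given any ideal $I$ of $D_\sigma \rtimes_\sigma \Z$, the cancellativity of $I \cap D$ allows us to apply Proposition \ref{prop:surjective} to obtain
\[ \Delta(\nabla(I)) \;=\; (I \cap D)_\sigma \rtimes_\sigma \Z \;=\; I. \]

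Under this organisation the main obstacle is already absorbed into Proposition \ref{prop:surjective}, which itself rests on the detection result Corollary \ref{cor:detectionD} and on the quotient construction of Lemma \ref{lem:newsystem}. With those tools already in place, the proof of the theorem is little more than an assembly of previously established facts; the only subtle point worth flagging is that the standing cancellativity hypothesis of the theorem is exactly what is needed to ensure that $\nabla$ lands in $\I(D)^{\mathrm{sic}}$ for \emph{every} ideal $I$, and hence that Proposition \ref{prop:surjective} applies uniformly to compute $\Delta \circ \nabla$.
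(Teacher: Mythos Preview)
Your proposal is correct and follows essentially the same approach as the paper: both use Lemma \ref{lem:intersection} for $\nabla\circ\Delta=\id$ and Proposition \ref{prop:surjective} for $\Delta\circ\nabla=\id$, with the standing cancellativity hypothesis ensuring the latter applies. Your write-up is in fact slightly more careful about well-definedness of $\Delta$ and $\nabla$ than the paper's own proof.
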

\begin{proof}
Fix an ideal $J\in\I^{\mathrm{sic}}(D)$.\
By Lemma \ref{lem:intersection}, we have that $J_{\sigma}\cap D=J$.\
As a consequence,
\begin{equation*}
{\nabla}\left(\Delta(J)\right) = (J_{\sigma}\rtimes_{\sigma}\Z) \cap D = J.
\end{equation*}

Now pick a non-zero ideal $I\subseteq D_{\sigma} \rtimes_{\sigma}\Z$.
By Corollary \ref{cor:detectionD}, $D$ detects ideals in $D_\sigma \rtimes_{\sigma}\Z$, and hence $I\cap D$ is a non-zero ideal of $D$.\
Moreover, $I\cap D$ is semi-invariant by Remark \ref{rem:semiinv}, and therefore also cancellative by assumption.\
From Proposition \ref{prop:surjective}, we have that $I$ coincides with the natural image of $(I\cap D)_{\sigma} \rtimes_{\sigma}\Z$ in $D_\sigma\rtimes_{\sigma}\Z$, thus showing that
\begin{equation*}
\Delta\left({\nabla}(I) \right) = (I\cap D)_{\sigma}\rtimes_{\sigma}\Z = I.
\end{equation*}
\end{proof}

\section{{Harnisch--Kirchberg system associated to a $\ast$-homomorphism}}
In this section we study the Harnisch--Kirchberg system associated to an injective, non-degenerate $\ast$-homomorphism $\f:A\to\M(A)$ such that $\f(A)\cap A=\{0\}$.\
This construction first appeared in \cite{HK24}.

\begin{definition}\label{def:sequencealgebra}
The \textit{sequence algebra} $A_{\infty}$ of a \Cs-algebra $A$ is defined as the quotient of the \Cs-algebra $\ell^{\infty}(\N,A)$ of bounded sequences with values in $A$ by the ideal of eventually vanishing sequences $c_0(\N,A)$.
\end{definition}

\begin{notation}[{cf.\ \cite[Section 4.3]{HK24}}]\label{not:HKphi}
Let $A$ be a \Cs-algebra, and $\f:A\to\M(A)$ an injective, non-degenerate $\ast$-homomorphism.\
Then $\f$ extends uniquely to an injective unital $\ast$-homomorphism $\M(A)\to\M(A)$ that is strictly continuous on the unit ball (see \cite[Propositions 2.1+2.5]{Lan95}).\
Slightly abusing notation, we denote this extension by $\f$.\
Define $\f_{\infty}:\M(A)\to\M(A)_{\infty}$ to be the $\ast$-homomorphism given by
\begin{equation*}
\f_{\infty}(a) = \left(\f^n(a)\right)_{n\in\N} + c_0\left(\N,\M(A)\right)
\end{equation*}
for all $a\in\M(A)$, where $\f^n$ denotes the $n$-fold application of $\f$.\
Moreover, we will denote by $\sigma$ the forward shift automorphism on $\M(A)_{\infty}$, i.e.,
\begin{equation*}
\sigma\left((x_n)_{n\in\N} + c_0(\N,\M(A))\right) = (x_{n-1})_{n\in\N} + c_0(\N,\M(A))
\end{equation*}
for all $(x_n)_{n\in\N} \in \ell^{\infty}\left(\N,\M(A)\right)$, with the convention that $x_n=0$ for $n<0$.
\end{notation}

\begin{lemma}[{cf.\ \cite[Lemma 4.23(i)]{HK24}}]\label{lem:phisystem}
Let $A$ be a \Cs-algebra, and $\f:A\to\M(A)$ an injective, non-degenerate $\ast$-homomorphism such that $\f(A)\cap A=\{0\}$.\
Then the triple $(\M(A)_{\infty},\sigma,\f_{\infty}(A))$ given as in Notation \ref{not:HKphi} is a Harnisch--Kirchberg system.
\end{lemma}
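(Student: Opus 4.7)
The plan is to verify the three axioms of Definition \ref{def:hk-system} for $(\M(A)_\infty, \sigma, \f_\infty(A))$ by direct computation in the sequence algebra. The linchpin, from which the rest will cascade, is the pointwise identity
\[
\f_\infty(a) \sigma(\f_\infty(b)) = \sigma(\f_\infty(\f(a) b)) \quad \text{for all } a, b \in A,
\]
obtained by comparing the $n$-th entry of either side: $\f^n(a) \cdot \f^{n-1}(b) = \f^{n-1}(\f(a) b)$. Note that $\f(a) b$ lies in $A$ because $A$ is a two-sided ideal of $\M(A)$, so the right-hand side does live in $\sigma(\f_\infty(A))$.

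For condition (i), the identity above immediately yields $\f_\infty(A) \sigma(\f_\infty(A)) \subseteq \sigma(\f_\infty(A))$. For the reverse inclusion, I would use that the extension of $\f$ to $\M(A)$ maps any approximate unit $\{e_\lambda\}$ of $A$ strictly to $\1$, so $\|\f(e_\lambda) a - a\| \to 0$ for every $a \in A$. Since each $\f^n$ is isometric, this upgrades to $\sigma(\f_\infty(\f(e_\lambda) a)) \to \sigma(\f_\infty(a))$ in norm, exhibiting $\sigma(\f_\infty(a))$ as a norm limit of products in $\f_\infty(A)\sigma(\f_\infty(A))$.

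Conditions (ii) and (iii) both reduce to the fact that, since each $\f^n$ is isometric, an element $\sigma(\f_\infty(c))$ with $c \in A$ vanishes in $\M(A)_\infty$ precisely when $c = 0$. For (ii), if $\f_\infty(a) = \sigma(\f_\infty(b))$, then the tail condition forces $\|\f^{n-1}(\f(a)-b)\| \to 0$; this quantity is constantly equal to $\|\f(a)-b\|$ by isometry, so $b = \f(a) \in A \cap \f(A) = \{0\}$, and hence $a = b = 0$. For (iii), if $\f_\infty(a) \sigma(\f_\infty(b)) = 0$ for all $b \in A$, the key identity gives $\sigma(\f_\infty(\f(a) b)) = 0$, which by the same vanishing criterion forces $\f(a) b = 0$ for every $b$; the analogous computation with the product reversed yields $b \f(a) = 0$ for every $b$, so $\f(a)$ annihilates $A$ on both sides. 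Since $A$ is an essential ideal of $\M(A)$, this forces $\f(a) = 0$, and injectivity of $\f$ gives $a = 0$.

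The only real source of friction is keeping the shift $\sigma$ consistent with the convention $0 \in \N$ and with working modulo $c_0$: setting the (irrelevant) entry $x_{-1} = 0$ makes every identity above hold already at the level of representative sequences, so the passage to the quotient is harmless. I anticipate no conceptual difficulty beyond this bookkeeping.
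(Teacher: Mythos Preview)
Your proof is correct and follows essentially the same approach as the paper. The paper phrases the key identity as $\f_\infty\circ\f=\sigma^{-1}\circ\f_\infty$ and then transfers the relations $\f(A)A=A$, $A\cap\f(A)=\{0\}$, and ``$A$ is essential in $A+\f(A)$'' through the injective $\ast$-homomorphism $\f_\infty$, which is exactly what your elementwise computations amount to; your argument for (iii) via $\f(a)A=A\f(a)=0\Rightarrow\f(a)=0$ is precisely the essentiality of $A$ in $\M(A)$ that the paper invokes.
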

\begin{proof}
It follows by the definition of $\sigma$ that $\f_{\infty} \circ \f = \sigma^{-1} \circ \f_{\infty}$.\
We will use this fact multiple times below without further mentioning it.\
Since $\f$ is non-degenerate, one has that $\f(A)A=A$, and therefore
\begin{equation}\label{eq:alpha}
\f_{\infty}(A)=\f_{\infty}(\f(A)A)=\sigma^{-1}(\f_{\infty}(A))\f_{\infty}(A),
\end{equation}
which is equivalent to the first condition in Definition \ref{def:hk-system}.

The second condition is $\sigma(\f_{\infty}(A))\cap\f_{\infty}(A)=\{0\}$.\
One may obtain this relation by applying $\sigma$ to the following identity,
\begin{equation*}
\f_{\infty}(A)\cap\sigma^{-1}(\f_{\infty}(A)) = \f_{\infty}(A\cap\f(A)) = \{0\}.
\end{equation*}

Now, note that since $A$ is an essential ideal in $\M(A)$, then it is also an essential ideal of $A+\f(A)$.\
Then, $\f_{\infty}(A)$ is an essential ideal of $\f_{\infty}(A+\f(A))=\f_{\infty}(A)+\sigma^{-1}(\f_{\infty}(A))$, which implies that $\sigma(\f_{\infty}(A))$ is an essential ideal of $\f_{\infty}(A)+\sigma(\f_{\infty}(A))$, and therefore that $(\M(A)_{\infty},\sigma,\f_{\infty}(A))$ is a Harnisch--Kirchberg system.
\end{proof}

\begin{remark}\label{rem:Cuntz-Pimsner}
Let $A$ and $\f:A\to\M(A)$ be as in Lemma \ref{lem:phisystem}.\
It was observed in \cite[Corollary 4.26(ii)]{HK24} that the Cuntz-Pimsner algebra $\mathcal{O}_{\Hil}$ \cite{Pi97} associated to $\f:A\to\M(A)=\B(\Hil)$, where $\Hil$ is the trivial right Hilbert \Cs-module over $A$, is the full hereditary \Cs-subalgebra of $(\f_{\infty}(A))_{\sigma}\rtimes_{\sigma}\Z$ generated by $\f_{\infty}(A)$.\
Therefore, if $A$ is moreover $\sigma$-unital and stable, it follows from Brown's stable isomorphism theorem \cite[Theorem 2.8]{Br77} and Proposition \ref{prop:stability} that $\mathcal{O}_{\Hil}$ and $(\f_{\infty}(A))_{\sigma}\rtimes_{\sigma}\Z$ are isomorphic.\
In the present work, we do not use this fact, and work directly in $(\f_{\infty}(A))_{\sigma}\rtimes_{\sigma}\Z$.
\end{remark}

\begin{notation}
Let $A$ be a \Cs-algebra, and $J\in\I(A)$.\
The set of all multipliers $x\in\M(A)$ such that $xA\subseteq J$ is a closed, two-sided ideal of $\M(A)$ denoted by $\M(A,J)$.\
Recall from \cite[Lemma 5.4]{KP23} that $\overline{\M(A,J)A}=\M(A,J)\cap A= J$, and that $\M(A,J)$ can be equivalently defined as the closure of $J$ in $\M(A)$ with respect to the strict topology, or as the kernel of the canonical map $\M(A)\to\M(A/J)$.
\end{notation}

\begin{notation}
Let $A$ be a \Cs-algebra, and $\f:A\to\M(A)$ a $\ast$-homomorphism.\
Denote by $\I(A)^{\f}\subseteq\I(A)$ the set consisting of all ideals $J\in\I(A)$ such that $\f(J) = \f(A)\cap\M(A,J)$.
\end{notation}

The following lemma provides necessary and sufficient conditions for an ideal of $A$ to generate a semi-invariant or cancellative ideal (see Definition \ref{def:semiinv}) in the Harnisch--Kirchberg system associated to a map $\f:A\to\M(A)$.

\begin{lemma}[{cf.\ \cite[Lemma 4.23(ii)+(iii)]{HK24}}]\label{lem:sicisomorphism}
Let $A$ be a \Cs-algebra, $\f:A\to\M(A)$ an injective, non-degenerate $\ast$-homomorphism such that $\f(A)\cap A=\{0\}$, and $(\M(A)_{\infty},\sigma,\f(A)_{\infty})$ the associated triple as in Notation \ref{not:HKphi}.\
Then, for any closed two-sided ideal $J\subseteq A$, the following statements hold true.
\begin{enumerate}[label=\textup{(\roman*)},leftmargin=*]
\item $\f(J)\subseteq \f(A)\cap\M(A,J)$ if and only if $\f(J)A\subseteq J$ if and only if $\f_{\infty}(J)$ is a semi-invariant ideal of $\f_{\infty}(A)$;
\item $\f(A)\cap(A+\M(A,J))\subseteq\f(J)$ if and only if $\f_{\infty}(J)$ is a cancellative ideal of $\f_{\infty}(A)$.
\end{enumerate}
In particular, $\f_{\infty}:\I(A)\to\I(\f_{\infty}(A))$ restricts to a bijection between $\I(A)^{\f}$ and $\I(\f_{\infty}(A))^{\mathrm{sic}}$.
\end{lemma}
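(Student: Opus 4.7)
The plan is to verify (i) and (ii) by translating the sequence-algebra conditions into statements about $A$, $\varphi$, and $J$, and then combine them to obtain the final bijection. The two tools throughout are the identity $\sigma^{-1} \circ \varphi_\infty = \varphi_\infty \circ \varphi$ from the proof of Lemma \ref{lem:phisystem} and the injectivity of $\varphi_\infty$ (immediate from $\varphi$ being isometric).

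For (i), the first equivalence is a direct unfolding: $\varphi(J) \subseteq \varphi(A)$ holds trivially, so the inclusion $\varphi(J) \subseteq \varphi(A) \cap \M(A,J)$ simplifies to $\varphi(J) \subseteq \M(A,J)$, which by definition of $\M(A,J)$ means $\varphi(J)A \subseteq J$. For the second, I apply $\sigma^{-1}$ to $\varphi_\infty(J)\sigma(\varphi_\infty(A)) \subseteq \sigma(\varphi_\infty(J))$ to get $\varphi_\infty(\varphi(J)A) \subseteq \varphi_\infty(J)$, and injectivity of $\varphi_\infty$ reduces this to $\varphi(J)A \subseteq J$.

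For (ii), I write generic elements $a = \varphi_\infty(x)$ and $b = \varphi_\infty(y)$ with $x,y \in A$. Applying $\sigma^{-1}$ converts the hypothesis $(a + \sigma(b))\sigma(\varphi_\infty(A)) \subseteq \sigma(\varphi_\infty(J))$ into $\varphi_\infty((\varphi(x)+y)A) \subseteq \varphi_\infty(J)$, equivalently $(\varphi(x)+y)A \subseteq J$, i.e., $\varphi(x)+y \in \M(A,J)$; since $y \in A$ is arbitrary, this is the same as $\varphi(x) \in A + \M(A,J)$. The conclusion $a \in \varphi_\infty(J)$ becomes $x \in J$, yielding the equivalence with $\varphi(A) \cap (A + \M(A,J)) \subseteq \varphi(J)$.

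For the ``in particular'' bijection, observe that $\varphi_\infty$ induces a lattice isomorphism $\I(A) \to \I(\varphi_\infty(A))$ via $J \mapsto \varphi_\infty(J)$, so it suffices to show this restricts to a bijection onto $\I(\varphi_\infty(A))^{\mathrm{sic}}$. Combining (i) and (ii), $\varphi_\infty(J) \in \I(\varphi_\infty(A))^{\mathrm{sic}}$ iff both $\varphi(J) \subseteq \varphi(A) \cap \M(A,J)$ and $\varphi(A) \cap (A + \M(A,J)) \subseteq \varphi(J)$ hold; since $\M(A,J) \subseteq A + \M(A,J)$, the resulting chain of inclusions forces $\varphi(J) = \varphi(A) \cap \M(A,J)$, so $J \in \I(A)^\varphi$. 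For the reverse implication, semi-invariance is immediate, while cancellation is obtained by passing to the induced map $\bar\varphi: A/J \to \M(A/J)$ and leveraging both $J \in \I(A)^\varphi$ (which makes $\bar\varphi$ injective and non-degenerate and forces $\varphi(\M(A,J)) \subseteq \M(A,J)$) and the standing hypothesis $\varphi(A) \cap A = \{0\}$ in its quotient form. The hardest step I expect is precisely this final reduction, since the defining equality of $\I(A)^\varphi$ only directly controls $\varphi(A) \cap \M(A,J)$, and one must rule out extra elements of $\varphi(A) \cap (A + \M(A,J))$ by a careful quotient-level analysis combined with an iteration argument through powers of $\varphi$.
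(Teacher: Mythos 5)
Your treatment of (i) and (ii) is correct and coincides with the paper's own proof: both reduce the sequence-algebra conditions to statements about $\f$, $A$ and $J$ by applying $\sigma^{\pm1}$, invoking the identity $\f_{\infty}\circ\f=\sigma^{-1}\circ\f_{\infty}$ and the injectivity (indeed isometry) of $\f_{\infty}$. The forward half of the ``in particular'' --- if $\f_{\infty}(J)$ is semi-invariant and cancellative then $\f(J)=\f(A)\cap\M(A,J)$, via the chain $\f(J)\subseteq\f(A)\cap\M(A,J)\subseteq\f(A)\cap(A+\M(A,J))\subseteq\f(J)$ --- is also exactly the paper's argument.

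The gap is in the reverse half, which you correctly single out as the hard step but never actually prove. Your proposed route, passing to $\bar\f:A/J\to\M(A/J)$ and using ``$\f(A)\cap A=\{0\}$ in its quotient form'', is circular: an element of $\bar\f(A/J)\cap(A/J)$ is precisely the image of some $\f(a)$ with $\f(a)\in A+\M(A,J)$, and it vanishes if and only if $a\in J$, so the quotient form of $\f(A)\cap A=\{0\}$ \emph{is} the cancellation statement you are trying to derive; it does not follow from $J\in\I(A)^{\f}$. In fact no argument can close this gap at the stated level of generality. Take $A=c_0(\N,\K)$ with $\K=\K(\Hil)$, define $\f(a)=(a_0,a_0^{\infty},a_1^{\infty},a_2^{\infty},\dots)\in\ell^{\infty}(\N,\B(\Hil))=\M(A)$, where $x^{\infty}$ is the infinite repeat as in Remark \ref{rem:compacts}, and let $J=\{a\in A \mid a_0=0\}$. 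Then $\f$ is injective, non-degenerate and satisfies $\f(A)\cap A=\{0\}$, and one checks $\f(A)\cap\M(A,J)=\f(J)$, so $J\in\I(A)^{\f}$; yet $\f(a)=(a_0,0,0,\dots)+(0,a_0^{\infty},a_1^{\infty},\dots)\in A+\M(A,J)$ for every $a\in A$, so $\f(A)\cap(A+\M(A,J))=\f(A)\not\subseteq\f(J)$ and $\f_{\infty}(J)$ is not cancellative. For what it is worth, the paper's own proof passes over this direction with an unjustified ``or, equivalently'': the condition characterising $\I(\f_{\infty}(A))^{\mathrm{sic}}$ is $\f(J)=\f(A)\cap(A+\M(A,J))$, which is strictly stronger than $\f(J)=\f(A)\cap\M(A,J)$ in general; in the paper's application the concrete maps $\f$ do satisfy the stronger condition, but that has to be verified separately. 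So your instinct that something nontrivial is needed here is right, but the sketch as given cannot be completed.
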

\begin{proof}
Recall that $(\M(A)_{\infty},\sigma,\f(A)_{\infty})$ is a Harnisch--Kirchberg system by Lemma \ref{lem:phisystem}.
In order to prove (i), observe that the identity $\f_{\infty} \circ \f = \sigma^{-1} \circ \f_{\infty}$ implies that
\begin{align*}
\sigma^{-1}(\f_{\infty}(J))\f_{\infty}(A)=\f_{\infty}(\f(J)A).
\end{align*}
Moreover, by definition we have that
\begin{align*}
\f^{-1}(\f(A)\cap\M(A,J))=\{a\in A \mid \f(a)A\subseteq J\}.
\end{align*}
Hence, we have that $J\subseteq \f^{-1}(\f(A)\cap\M(A,J))$ if and only if $\f(J)A\subseteq J$ if and only if
\begin{align*}
\f_{\infty}(J)\sigma(\f_{\infty}(A))=\sigma(\f_{\infty}(\f(J)A)) \subseteq \sigma(\f_{\infty}(J)),
\end{align*}
which concludes part (i).

In order to prove (ii), note that for any $a\in A$, one has that $\f(a)\in\M(A,J)+A$ precisely when there exists $b\in A$ such that $(\f(a)+b)A\subseteq J$.
By applying $\sigma \circ \f_{\infty}$ to left and right hand sides, this condition becomes equivalent to
\begin{align*}
 (\f_{\infty}(a)+\sigma(\f_{\infty}(b)))\sigma(\f_{\infty}(A))\subseteq \sigma(\f_{\infty}(J)).
\end{align*}
It follows that $\f_{\infty}(J)$ is cancellative if and only if $J\supseteq \f^{-1}(\f(A)\cap(A+\M(A,J)))$.

In particular, $\f_{\infty}(J)$ is semi-invariant and cancellative precisely when
\begin{equation*}
\f(J) \subseteq \f(A)\cap\M(A,J) \subseteq \f(A)\cap(A+\M(A,J)) \subseteq \f(J),
\end{equation*}
or, equivalently, $\f(J) = \f(A)\cap\M(A,J)$.
\end{proof}

We are now able to establish, under suitable assumption on a map $\f:A\to\M(A)$, that $\I^{\f}(A)$ is naturally isomorphic to the lattice of ideals of $D_{\sigma}\rtimes_{\sigma}\Z$, where $D_{\sigma}$ comes from the Harnisch--Kirchberg system associated to $\f$.

\begin{theorem}[{cf.\ \cite[Corollary 4.18]{HK24}}]\label{thm:sicisomorphismphi}
Let $A$ be a \Cs-algebra, and $\f:A\to\M(A)$ an injective, non-degenerate $\ast$-homomorphism such that $\f(A)\cap A=\{0\}$.\
If every ideal $J\in\I(A)$ such that $\f(J)\subseteq\f(A)\cap\M(A,J)$ automatically satisfies $\f(J)=\f(A)\cap\M(A,J)$, then the order preserving map given by
\begin{equation*}
\Psi: \I(A)^{\f} \to \I((\f_{\infty}(A))_\sigma \rtimes_{\sigma} \Z), \quad J \mapsto (\f_{\infty}(J))_{\sigma}\rtimes_{\sigma}\Z
\end{equation*}
is an order isomorphism with inverse
\begin{equation*}
\Psi^{-1}: \I((\f_{\infty}(A))_\sigma \rtimes_{\sigma} \Z) \to \I(A)^{\f}, \quad I \mapsto \f_{\infty}^{-1}(I\cap \f_{\infty}(A))
\end{equation*}
\end{theorem}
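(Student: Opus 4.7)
The plan is to assemble the theorem from three ingredients already available in the excerpt: Lemma \ref{lem:phisystem} (which says that $(\M(A)_\infty,\sigma,\f_\infty(A))$ is a Harnisch--Kirchberg system), Lemma \ref{lem:sicisomorphism} (which identifies $\I(A)^{\f}$ with $\I(\f_\infty(A))^{\mathrm{sic}}$), and Theorem \ref{thm:orderisomorphism} (which gives an order isomorphism between $\I(D)^{\mathrm{sic}}$ and $\I(D_\sigma\rtimes_\sigma\Z)$ whenever every semi-invariant ideal of $D$ is cancellative).

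First I would note that $\f$ being injective and non-degenerate makes $\f_\infty$ isometric on $A$ (since its components are all iterates of $\f$, all isometric), and therefore $\f_\infty\colon A\to \f_\infty(A)$ is a $\ast$-isomorphism. In particular, $\f_\infty$ induces a lattice bijection $\I(A)\to\I(\f_\infty(A))$ sending $J$ to $\f_\infty(J)$.

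Next I would translate the hypothesis of the theorem. By Lemma \ref{lem:sicisomorphism}(i), for any $J\in\I(A)$, the ideal $\f_\infty(J)\subseteq \f_\infty(A)$ is semi-invariant (with respect to $\sigma$) if and only if $\f(J)\subseteq\f(A)\cap\M(A,J)$. The hypothesis asserts that whenever this inclusion holds it is an equality, and by the last part of Lemma \ref{lem:sicisomorphism} this equality $\f(J)=\f(A)\cap\M(A,J)$ is exactly the condition that $\f_\infty(J)$ is both semi-invariant and cancellative. Since every ideal of $\f_\infty(A)$ is of the form $\f_\infty(J)$ for a unique $J\in\I(A)$, this shows that every semi-invariant ideal of $\f_\infty(A)$ is automatically cancellative, so that Theorem \ref{thm:orderisomorphism} applies to the Harnisch--Kirchberg system $(\M(A)_\infty,\sigma,\f_\infty(A))$.

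I can then set $\Psi:=\Delta\circ \f_\infty|_{\I(A)^{\f}}$, where $\Delta\colon \I(\f_\infty(A))^{\mathrm{sic}}\to\I(\f_\infty(A)_\sigma\rtimes_\sigma\Z)$ is the order isomorphism from Theorem \ref{thm:orderisomorphism} with inverse $\nabla(I)=I\cap\f_\infty(A)$. Explicitly, $\Psi(J)=(\f_\infty(J))_\sigma\rtimes_\sigma\Z$, matching the statement, and its inverse is $J\mapsto \f_\infty^{-1}(\nabla(I))=\f_\infty^{-1}(I\cap\f_\infty(A))$, as required. Since $\f_\infty|_{\I(A)^{\f}}$ is an order isomorphism onto $\I(\f_\infty(A))^{\mathrm{sic}}$ by Lemma \ref{lem:sicisomorphism} and $\Delta$ is an order isomorphism by Theorem \ref{thm:orderisomorphism}, the composition is an order isomorphism.

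The only step that requires real attention is the translation of hypotheses: one must verify that the condition phrased at the level of $A$ (namely $\f(J)=\f(A)\cap\M(A,J)$ for every $J$ satisfying the one-sided inclusion) precisely matches the hypothesis of Theorem \ref{thm:orderisomorphism} phrased at the level of $D=\f_\infty(A)$ (every semi-invariant ideal is cancellative). Once this correspondence is made, the remainder of the proof is a formal composition, so the main obstacle is essentially bookkeeping rather than a genuinely new argument.
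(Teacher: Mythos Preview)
Your proposal is correct and follows exactly the same route as the paper: compose the bijection $\f_\infty\colon\I(A)^{\f}\to\I(\f_\infty(A))^{\mathrm{sic}}$ from Lemma~\ref{lem:sicisomorphism} with the order isomorphism $\Delta$ of Theorem~\ref{thm:orderisomorphism}. If anything, you are more explicit than the paper in checking that the hypothesis of Theorem~\ref{thm:orderisomorphism} (every semi-invariant ideal of $D=\f_\infty(A)$ is cancellative) is implied by the hypothesis on $\f$; the paper takes this for granted and simply writes $\Psi=\Delta\circ\f_\infty$.
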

\begin{proof}
To simplify notation, let us denote $\f_{\infty}(A)$ by $D$.\
From Theorem \ref{thm:orderisomorphism} we get an order isomorphism $\Delta:\I(D)^{\mathrm{sic}}\to\I(D_\sigma \rtimes_{\sigma} \Z)$ given by $\Delta(J)=J_{\sigma}\rtimes_{\sigma}\Z$ and inverse $\Delta^{-1}:\I(D_\sigma \rtimes_{\sigma} \Z)\to\I(D)^{\mathrm{sic}}$, where $\Delta^{-1}(I)=I\cap D$.\
Moreover, by Lemma \ref{lem:sicisomorphism}, we have that $\Psi = \Delta \circ \f_{\infty}$ and its inverse have the desired properties.
\end{proof}

In the last part of this section, we establish that for a separable, stable \Cs-algebra $B$ and a locally compact, second-countable group $G$, one can always find a $\ast$-homomorphism from $A=\C_0(G)\otimes B$ into its multiplier algebra giving rise to a Harnisch--Kirchberg system with certain properties that will be used in the next section.\
Let us first introduce a few preliminaries.

\begin{notation}
Here, and in the rest of the article that follows, we assume that $G$ is a locally compact, second-countable group.

Let $\mu$ be a left Haar measure on $G$.\
We denote the Hilbert spaces $L^2(G,\mu)$ and $\ell^2(\N)\hat{\otimes}L^2(G,\mu)$ by $\Hil_G$ and $\Hil_G^\infty$, respectively.\
Moreover, the left-regular representation of $G$ is denoted by $\lambda:G\to\U(\Hil_G)$, i.e., $\lambda_g(\xi)(h)=\xi(g^{-1}h)$ for all $\xi\in\Hil_G$ and $g,h\in G$, and its infinite repeat by $\lambda^\infty$, i.e., $\lambda^{\infty}=\1_{\ell^2(\N)}\otimes\lambda:G\to\U(\Hil_G^\infty)$.
\end{notation}

\begin{remark}\label{rem:compacts}
Let $B$ be a stable \Cs-algebra, and fix a sequence of isometries $s_n\in\M(B)$ such that $\sum_{n\in\N}s_ns_n^*=\1$ strictly.\
Let $\kappa:B\otimes\K \to B$ the $\ast$-isomorphism that sends $b\otimes e_{i,j}$ to $s_i b s_j^*$ for all $b\in B$, and a set of matrix units $e_{i,j}$ generating $\K$.\
One may extend this to an isomorphism $\kappa:\M(B\otimes\K)\to\M(B)$.\
In the context of this article, we use the notation $x^{\infty}=\kappa(x\otimes\1)$, and say that $x^{\infty}$ is the infinite repeat of $x$.\
Equivalently, one can define $x^{\infty}=\sum_{n\in\N}s_n x s_n^*$ with convergence in the strict topology.\
If $A\subseteq \M(B)$ is a \Cs-subalgebra, we denote by $A^{\infty}$ the set of infinite repeats of elements of $A$.\
Note that $I^{\infty}\subseteq\M(B,I)$.
Observe that the injective, non-degenerate $\ast$-homomorphism $\zeta:\K \xhookrightarrow{\1\otimes\id} \M(B\otimes\K)\xrightarrow{\kappa}\M(B)$ is given on matrix units by $\zeta(e_{i,j})=s_is_j^*$, and its image lies in $\M(B)\cap(\M(B)^{\infty})'$.\
It follows that $\zeta$ extends uniquely to an injective unital $\ast$-homomorphism $\M(\K)\to\M(B)\cap(\M(B)^{\infty})'$ that is strictly continuous on the unit ball (see, e.g., \cite[Propositions 2.1+2.5]{Lan95}).
\end{remark}

\begin{theorem}[{cf.\ \cite[Theorem 5.7]{KP23}}]\label{thm:existencephi}
Let $B$ be a separable, stable \Cs-algebra, and denote the \Cs-algebra $\C_0(G)\otimes B$ by $A$.
There exists a non-degenerate, injective $\ast$-homomorphism $\f:A \to \M(A)$ with $\f(A)\cap A=\{0\}$, and such that the following are equivalent for any $J\in\I(A)$,
\begin{enumerate}[label=\textup{(\roman*)},leftmargin=*]
\item $\f(J)\subseteq\f(A)\cap\M(A,J)$,
\item $J=\C_0(G)\otimes I$ for some $I\in\I(B)$,
\item $\f(J)=\f(A)\cap\M(A,J)$.
\end{enumerate}
\end{theorem}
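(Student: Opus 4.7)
The plan is to construct $\f$ explicitly by combining a dense sequence in $G$ with an ``infinite repeat'' coming from stability of $B$. Fix a dense sequence $(g_n)_{n\in\N} \subseteq G$ and, using stability of $B$, isometries $s_n \in \M(B)$ with $\sum_{n} s_n s_n^* = \1$ strictly (cf.\ Remark \ref{rem:compacts}). Identifying $A = \C_0(G)\otimes B$ with $\C_0(G,B)$ and $\M(A)$ with the strictly continuous bounded maps $G \to \M(B)$, I would define
\[
\f(a)(g) \;=\; \sum_{n\in\N} s_n\, a(g_n g)\, s_n^*, \qquad a\in A,\; g\in G,
\]
where the sum converges in the strict topology. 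Strict continuity and boundedness in $g$ follow from continuity of $a$ together with $\sum s_n s_n^* = \1$; that $\f$ is a $\ast$-homomorphism follows from $s_n^* s_m = \delta_{n,m}\1$. Injectivity uses that each translated orbit $(g_n g)_{n}$ is dense in $G$. Non-degeneracy can be verified by testing against block elements $a'(g) = s_m c(g) s_k^*$: the sum collapses to the single term $s_m\, a(g_m g)\, c(g)\, s_k^*$, and as $a,c$ vary these elements have dense linear span in $A$. Finally, $\f(A) \cap A = \{0\}$ because if $\f(a)(g_0) = \sum_n s_n a(g_n g_0) s_n^* \in B$, then $\|a(g_n g_0)\| \to 0$, which forces $a \equiv 0$ by density of $(g_n g_0)_n$ and continuity.

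The heart of the proof is a pointwise translation of condition (i) in terms of the fibres of $J$. Viewing $J$ as the ideal of $A$ determined by a lower-semicontinuous family of fibres $(J_g)_{g\in G}$ of ideals of $B$, one has $\f(f)\in\M(A,J)$ if and only if $\f(f)(g) \in \M(B,J_g)$ for every $g\in G$. The key claim is then
\[
\f(f)(g) \in \M(B,J_g) \iff f(g_n g)\in J_g \text{ for every } n\in\N.
\]
The forward direction uses that $s_n^*\,\f(f)(g)\,s_n = f(g_n g)$ and that $\M(B,J_g)$ is a two-sided ideal of $\M(B)$, combined with $\M(B,J_g)\cap B = J_g$. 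The reverse direction relies on norm-convergence of $\sum_n s_n f(g_n g) s_n^* b$ for each $b\in B$ (since $s_n^* b \to 0$ in norm) and the fact that every partial sum lies in the closed ideal $J_g$. From this, (ii) $\Rightarrow$ (iii) is immediate: if $J = \C_0(G)\otimes I$ then $J_g = I$ for every $g$, and $f(g_n g)\in I$ for all $n,g$ is equivalent, by density of $(g_n g)_{n}$ and continuity of $f$, to $f\in \C_0(G)\otimes I$. The implication (iii) $\Rightarrow$ (i) is trivial.

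For the remaining implication (i) $\Rightarrow$ (ii), assume $\f(J)\subseteq \f(A)\cap\M(A,J)$. Then for every $f\in J$ and every $g\in G$ we have $f(g_n g)\in J_g$ for all $n\in\N$, and by density of $(g_n g)_{n}$ together with norm-closedness of $J_g$, $f(h)\in J_g$ for \emph{every} $h\in G$. Varying $g$, the values of $f$ land in $I := \bigcap_{g\in G} J_g \in \I(B)$. This shows $J\subseteq \C_0(G)\otimes I$, and the reverse inclusion $\C_0(G)\otimes I \subseteq J$ is automatic from $I\subseteq J_g$ for every $g$; hence $J = \C_0(G)\otimes I$. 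The main obstacle I foresee is the pointwise translation of $\f(f)\in \M(A,J)$ into the condition $f(g_n g)\in J_g$, as one must manipulate infinite strict sums inside the multiplier algebra of a stable $\Cs$-algebra while carefully keeping track of the ideal properties of $J_g \subseteq B$ and $\M(B,J_g)\subseteq \M(B)$.
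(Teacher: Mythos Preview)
Your argument is correct, but the construction of $\f$ is genuinely different from the paper's. The paper defines $\psi:\C_0(G)\to\M(B)\cap(\M(B)^\infty)'$ by representing $\C_0(G)$ as multiplication operators on $L^2(G)$ (inside $\M(\K(\Hil_G^\infty))\hookrightarrow\M(B)$), and then sets
\[
\f(f\otimes b)=\1\otimes\psi(f)\,b^\infty\ \in\ \1\otimes\M(B)\subseteq\M(A).
\]
Thus the paper's $\f$ lands in \emph{constant} functions, which makes $\f(A)\cap A=\{0\}$ immediate and, more importantly, yields the clean identity $\overline{A\f(I_1\otimes I_2)A}=\C_0(G)\otimes I_2$. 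The implication (i)$\Rightarrow$(ii) is then obtained by picking, for each fibre ideal $I_g=\ev_g(J)$, a full element $b\in I_g$ (here separability of $B$ is used) together with some nonzero $f$ so that $f\otimes b\in J$, and reading off $\C_0(G)\otimes I_g=\overline{A\f(f\otimes b)A}\subseteq J$.

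Your construction instead uses a countable dense set $(g_n)\subseteq G$ and the ``diagonal'' map $\f(a)(g)=\sum_n s_n\,a(g_n g)\,s_n^*$. The verification is more hands-on but entirely fibrewise: the key equivalence $\f(f)(g)\in\M(B,J_g)\Leftrightarrow f(g_ng)\in J_g$ for all $n$ reduces everything to density of $(g_n g)_n$ and the sectional description $J=\{f:f(g)\in J_g\}$ (which indeed holds for ideals in $\C_0(G,B)$). This avoids the ``full element'' trick and the computation of $\overline{A\f(J)A}$ altogether. The trade-off is that the paper's $\f_0$ is tailored so that $\overline{A\f_0(J)A}$ depends only on the $B$-component of $J$, a feature exploited verbatim in the proof of the equivariant upgrade (Theorem~\ref{thm:existencephi2}); with your $\f$ that subsequent step would need its own argument.
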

\begin{proof}
In this proof, we denote by $\K$ the compact operators on $\Hil_G^{\infty}=\ell^2(\N)\hat{\otimes}L^2(G)$.\
Define a non-degenerate, injective $\ast$-homomorphism given by
\begin{equation*}
\psi:\C_0(G) \xhookrightarrow{\pi} \M(\K) \xrightarrow{\zeta} \M(B) \cap (\M(B)^{\infty})',
\end{equation*}
where $\zeta$ is an isomorphism as in Remark \ref{rem:compacts}, and $\pi:\C_0(G)\to\M(\K)$ is the $\ast$-representation that lets elements of $\C_0(G)$ act as multiplication operators on $L^2(G)$ and trivially on $\ell^2(\N)$.
One can therefore define the following non-degenerate $\ast$-homomorphism
\begin{equation*}
\f: \C_0(G)\otimes B \to \M(\C_0(G)\otimes B), \quad f\otimes b \mapsto \1 \otimes \psi(f) \cdot b^{\infty}.
\end{equation*}
To see that $\f$ is faithful, it suffices to observe that the image of $\psi$ under the isomorphism $\M(B\otimes\K)\cong\M(B)$ in Remark \ref{rem:compacts} lies in $\M(\K)\otimes\1_{\M(B)}$, while that of $\M(B)^{\infty}$ lies in $\1_{\M(\K)}\otimes\M(B)$.\
Moreover, $\f(A)\cap A=\{0\}$ because $\f(A)\subseteq\1\otimes\M(B)$ and $(\1\otimes\M(B))\cap A=\{0\}$.\

Since (iii)$\Rightarrow$(i) is always true, we only need to prove that (i)$\Rightarrow$(ii)$\Rightarrow$(iii).

We first show that for any ideal $I_1\in\I(\C_0(G))$ and $I_2\in\I(B)$, one has that $\overline{A\f(I_1\otimes I_2)A}=\C_0(G)\otimes I_2$.\
This follows from the fact that $A\f(x_1\otimes x_2)A=\C_0(G)\otimes(B\psi(x_1)x_2^{\infty}B)$, and that $\overline{B\psi(x_1)x_2^{\infty}B}=\overline{Bx_2B}$ for all $x_1\in I_1$ and $x_2\in I_2$.\
The second observation holds true because the image of the ideal $\overline{B\psi(x_1)x_2^{\infty}B}$ under the isomorphism $\M(B\otimes\K)\cong\M(B)$ of Remark \ref{rem:compacts} is $\overline{Bx_2B}\otimes\K$.

We want to show that (i)$\Rightarrow$(ii).\
Pick an ideal $J\in\I(A)$ satisfying $\f(J)\subseteq\f(A)\cap\M(A,J)$.\
For each group element $g\in G$, let $I_g$ be the ideal of $B$ given by $\I(\ev_g)(J)$.\
Note that $J\subseteq \C_0(G)\otimes\overline{\sum_{g\in G}I_g}$, and we want to prove that the opposite inclusion holds as well.\
Since $B$ is separable and $\C_0(G)$ exact, for each $g\in G$, one may find a full element $b$ of $I_g\in\I(B)$, and a non-zero function $f\in\C_0(G)$ such that $f\otimes b \in J$ (see, e.g., \cite[Corollary 9.4.6]{BO08}).
One may then conclude that $\C_0(G)\otimes I_g = \overline{A\f(f\otimes b)A} \subseteq J$, where the last inclusion follows from the fact that $J\subseteq \f^{-1}(\f(A)\cap\M(A,J))$.\
As a consequence, one gets that $J = \C_0(G)\otimes I$, where $I = \overline{\sum_{g\in G}I_g}$.

In order to show that (ii)$\Rightarrow$(iii), pick an ideal of the form $J = \C_0(G)\otimes I$ for some $I\in\I(B)$.\
First, we observe that
\begin{equation*}
  \f(J)A = \C_0(G)\otimes(\psi(\C_0(G))\cdot I^{\infty}\cdot B) \subseteq J,
\end{equation*}
and hence $J\subseteq \f^{-1}(\f(A)\cap\M(A,J))$.\
Pick now an element $(f\otimes b)\in \f^{-1}(\f(A)\cap\M(A,J))$ for some $f\in\C_0(G)$ and $b\in B$.\
The condition $\f(f\otimes b)A\subseteq J$ implies that $\psi(f)b^{\infty}B\subseteq I$, and by the claim proved before, that $\overline{BbB}\subseteq I$.\
In particular, $b\in I$.\
Consequently, $(f\otimes b)\in J$, and therefore $J = \f^{-1}(\f(A)\cap\M(A,J))$.\
This implies that $J$ satisfies $\f(J)=\f(A)\cap\M(A,J)$.
\end{proof}

\section{Main theorem and an application to classification}
In the present section, we prove that every continuous action on the primitive ideal space of a separable, nuclear \Cs-algebra lifts to a \Cs-action as described in the introduction.\
By combining this theorem with classification results in \cite{PS23}, we get an explicit bijection between continuous actions on primitive ideal spaces, identified up to conjugacy, and amenable, isometrically shift-absorbing, equivariantly $\O2$-stable actions up to cocycle conjugacy (or conjugacy when $G$ is compact).

\begin{lemma}\label{lem:ideals-preimage}
Let $A$ be a \Cs-algebra, and $\f_1,\f_2:A\to\M(A)$ two $\ast$-homomorphisms.\
If $\overline{A\f_1(J)A}=\overline{A\f_2(J)A}$ for an ideal $J\in\I(A)$, then $\f_1^{-1}(\f_1(A)\cap\M(A,J))=\f_2^{-1}(\f_2(A)\cap\M(A,J))$.
\end{lemma}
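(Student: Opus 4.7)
The plan is to unfold the preimage condition via the characterisation of $\M(A,J)$ and then reduce the claim, by symmetry, to a single ideal containment that I can attack using the hypothesis. Since $\M(A,J)$ is a closed two-sided ideal of $\M(A)$ with $\M(A,J)=\{x\in\M(A):xA\subseteq J\}$, for each $i\in\{1,2\}$
\[
\f_i^{-1}(\f_i(A)\cap\M(A,J))=\{a\in A:\f_i(a)A\subseteq J\}=\{a\in A:\overline{A\f_i(a)A}\subseteq J\},
\]
where the last rewriting uses the two-sided ideal structure of $\M(A,J)$. By interchanging the roles of $\f_1$ and $\f_2$, it suffices to prove the ``$\subseteq$'' inclusion. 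So I fix $a\in A$ with $\overline{A\f_1(a)A}\subseteq J$ and work toward $\overline{A\f_2(a)A}\subseteq J$.

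The first step is to pivot from the element $a$ to the closed two-sided ideal $I:=\overline{AaA}$ of $A$ generated by $a$. Multiplicativity and continuity of $\f_i$ give $\f_i(I)=\overline{\f_i(A)\f_i(a)\f_i(A)}$, and combining this with the obvious inclusions $A\f_i(A),\f_i(A)A\subseteq A$ yields
\[
\overline{A\f_i(I)A}=\overline{A\f_i(a)A}\qquad\text{for }i\in\{1,2\}.
\]
Thus the hypothesis reads $\overline{A\f_1(I)A}\subseteq J$, and the target becomes $\overline{A\f_2(I)A}\subseteq J$. The advantage of this reformulation is that now both sides are expressed as closed ideals of $A$ of the same structural type as $\overline{A\f_i(J)A}$, which is what the hypothesis controls.

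The crux of the proof is to execute the $\f_1\leftrightarrow\f_2$ swap at the level of the ideal $I$ using the equality for $J$. My plan is to approximate elementwise: every element of $\overline{A\f_2(I)A}$ is a norm limit of sums of products $x\f_2(uav)y$ with $x,y,u,v\in A$, and by rewriting $\f_2(uav)=\f_2(u)\f_2(a)\f_2(v)$ the task reduces to showing that each $x\f_2(u)\f_2(a)\f_2(v)y$ lies in $J$. One would express $\f_2(a)$ strictly as a limit of elements coming from the hypothesis --- specifically, of approximants built from the coinciding ideals $\overline{A\f_1(J)A}=\overline{A\f_2(J)A}$ --- and then use the known containment $\overline{A\f_1(a)A}\subseteq J$ to conclude that the limit lands in $J$ (which is closed).

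The main obstacle, which I expect to be the most delicate point, is that the hypothesis is assumed only for the fixed ideal $J$ and not for the ideal $I=\langle a\rangle_A$, so the swap from $\f_2$- to $\f_1$-products cannot be invoked directly on elements of $I$; it has to be routed through $J$. The resolution, I anticipate, uses that the containment $\overline{A\f_1(a)A}\subseteq J$ itself couples $I$ to $J$ via $\f_1$: concretely, the multipliers $\f_1(c)$ for $c\in I$ already act on $A$ with image in $J$, and via the equality $\overline{A\f_1(J)A}=\overline{A\f_2(J)A}$ this coupling can be transported to $\f_2$ through a careful bimodule/approximate-unit argument inside the common ideal $\overline{A\f_1(J)A}$. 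Making this transfer rigorous without introducing additional hypotheses is the technical heart of the proof.
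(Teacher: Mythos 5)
Your opening reduction is exactly the paper's: unfold $\f_i^{-1}(\f_i(A)\cap\M(A,J))=\{a\in A:\f_i(a)A\subseteq J\}=\{a\in A:\overline{A\f_i(a)A}\subseteq J\}$, argue one inclusion by symmetry, and reduce to showing $\overline{A\f_2(a)A}\subseteq J$ whenever $\overline{A\f_1(a)A}\subseteq J$. But the proof stops there: your final paragraph describes a hoped-for ``transfer'' through the single ideal $J$ and explicitly concedes that making it rigorous is still open. This is a genuine gap, and it cannot be closed under the reading of the hypothesis you are working with. If the equality $\overline{A\f_1(J)A}=\overline{A\f_2(J)A}$ is assumed only for the one fixed ideal $J$, the statement is false: take $A=\mathbb{C}^2=\M(A)$, $J=\mathbb{C}\oplus\{0\}$, $\f_1=\id_A$ and $\f_2(x,y)=(x,0)$. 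Then $\overline{A\f_1(J)A}=J=\overline{A\f_2(J)A}$ and $\M(A,J)=J$, yet $\f_1^{-1}(\f_1(A)\cap\M(A,J))=J$ while $\f_2^{-1}(\f_2(A)\cap\M(A,J))=A$. So no approximate-unit or bimodule argument routed through $J$ alone can succeed.

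The intended hypothesis --- and the one actually available where the lemma is applied in the proof of Theorem \ref{thm:existencephi2} --- is that $\overline{A\f_1(J)A}=\overline{A\f_2(J)A}$ holds for \emph{every} ideal $J\in\I(A)$. With that reading the step you are missing is immediate and is precisely what the paper's proof does: for $a$ with $\f_1(a)A\subseteq J$, apply the hypothesis to the principal ideal $\overline{AaA}$ and use your own observation that $\overline{A\f_i(\overline{AaA})A}=\overline{A\f_i(a)A}$ (which relies only on $A\f_i(A),\f_i(A)A\subseteq A$) to get
\begin{equation*}
\f_2(a)A\subseteq\overline{A\f_2(a)A}=\overline{A\f_2(\overline{AaA})A}=\overline{A\f_1(\overline{AaA})A}=\overline{A\f_1(a)A}\subseteq J,
\end{equation*}
whence $a\in\f_2^{-1}(\f_2(A)\cap\M(A,J))$. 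In short: your reduction is correct and coincides with the paper's, but the ``technical heart'' you defer is not a technicality --- it is the point where the hypothesis must be invoked for the ideal generated by $a$, not for $J$, and the lemma's quantifier has to be read accordingly.
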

\begin{proof}
By assumption, $\overline{A\f_1(J)A}=\overline{A\f_2(J)A}$ for an ideal $J\in\I(A)$.\
Pick an element $a\in\f_1^{-1}(\f_1(A)\cap\M(A,J))$.\
We have that $\f_2(a)A \subseteq \overline{A\f_2(a)A} = \overline{A\f_1(a)A} \subseteq J,$ which means that $a\in \f_2^{-1}(\f_2(A)\cap\M(A,J))$.\
By switching the role of $\f_1$ and $\f_2$, one shows that if $a\in \f_2^{-1}(\f_2(A)\cap\M(A,J))$ then it must be $a\in\f_1^{-1}(\f_1(A)\cap\M(A,J))$.\
In particular, $\f_1^{-1}(\f_1(A)\cap\M(A,J))=\f_2^{-1}(\f_2(A)\cap\M(A,J))$.
\end{proof}

\begin{lemma}\label{lem:tri-implication}
Let $B$ be a \Cs-algebra, $A=\C_0(G)\otimes B$, and $\f_1,\f_2:A\to\M(A)$ two $\ast$-homomorphisms such that  $\f_1^{-1}(\f_1(A)\cap\M(A,J))=\f_2^{-1}(\f_2(A)\cap\M(A,J))$ for all $J\in\I(A)$.\
Consider the following properties for an ideal $J\in\I(A)$, and $i=1,2$
\begin{enumerate}[label=\textup{(\roman*)},leftmargin=*]
\item $\f_i(J)\subseteq\f_i(A)\cap\M(A,J)$,\label{i1}
\item $J=\C_0(G)\otimes I$ for some $I\in\I(B)$,\label{i2}
\item $\f_i(J)=\f_i(A)\cap\M(A,J)$.\label{i3}
\end{enumerate}
If \ref{i1}$\Leftrightarrow$\ref{i2}$\Leftrightarrow$\ref{i3} for $i=1$, the same holds for $i=2$.
\end{lemma}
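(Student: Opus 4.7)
The plan is to transport the equivalences (i) $\Leftrightarrow$ (ii) $\Leftrightarrow$ (iii) from $\f_1$ to $\f_2$, exploiting the preimage hypothesis both at a general ideal $J$ and at the trivial ideal. The crucial auxiliary input will be the equality of kernels $\ker(\f_1) = \ker(\f_2)$, which I would derive by specialising the hypothesis to $J = \{0\}$: since $A$ is an essential ideal of $\M(A)$, one has $\M(A,\{0\}) = \{0\}$, and hence $\f_i^{-1}(\f_i(A) \cap \M(A,\{0\})) = \ker(\f_i)$ for $i = 1, 2$.

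With this in hand, observe that condition (i) for $\f_i$ is literally $J \subseteq \f_i^{-1}(\f_i(A) \cap \M(A,J))$, so the preimage hypothesis immediately yields (i) for $\f_1$ $\Leftrightarrow$ (i) for $\f_2$, and hence (i) for $\f_2$ $\Leftrightarrow$ (ii). For the equivalence (iii) for $\f_2$ $\Leftrightarrow$ (ii), the forward direction is automatic since (iii) always implies (i). For the converse, I would assume (ii), use the $\f_1$ equivalences to obtain (iii) for $\f_1$, namely $\f_1(J) = \f_1(A) \cap \M(A,J)$, and then chase an arbitrary element $y = \f_2(a) \in \f_2(A) \cap \M(A,J)$ as follows.

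The preimage hypothesis places $a$ in $\f_1^{-1}(\f_1(A) \cap \M(A,J)) = \f_1^{-1}(\f_1(J)) = J + \ker(\f_1)$, where the last equality is the standard identity for a $\ast$-homomorphism applied to an ideal. Writing $a = j + k$ with $j \in J$ and $k \in \ker(\f_1) = \ker(\f_2)$ gives $\f_2(a) = \f_2(j) \in \f_2(J)$, which combined with (i) for $\f_2$ (immediate from (ii)) establishes (iii) for $\f_2$. The only mild obstacle is the kernel equality step, which rests on the observation $\M(A,\{0\}) = \{0\}$; once this is in place, the remaining transfer is a short element chase through the preimage hypothesis.
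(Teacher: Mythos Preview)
Your argument is correct and follows the same route as the paper: transfer condition (i) directly via the preimage hypothesis, then handle (ii)~$\Rightarrow$~(iii) for $\f_2$ by pulling an arbitrary element of $\f_2(A)\cap\M(A,J)$ back through the common preimage and pushing forward again with $\f_2$.

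The one genuine refinement in your write-up is the kernel step. The paper's proof asserts directly that (i) for $\f_1$ is equivalent to the equality $J=\f_1^{-1}(\f_1(A)\cap\M(A,J))$, and then reads off (iii) for $\f_2$ by applying $\f_2$. Strictly speaking, passing from $\f_1(J)=\f_1(A)\cap\M(A,J)$ to $J=\f_1^{-1}(\f_1(A)\cap\M(A,J))$ uses $\ker\f_1\subseteq J$, which in the paper's intended application is automatic because the map playing the role of $\f_1$ (constructed in Theorem~\ref{thm:existencephi}) is injective. Your observation that specialising the hypothesis to $J=\{0\}$ forces $\ker\f_1=\ker\f_2$, together with the identity $\f_1^{-1}(\f_1(J))=J+\ker\f_1$, makes the element chase go through without any injectivity assumption and hence matches the lemma exactly as stated. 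So your version is essentially the paper's argument with this small gap closed.
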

\begin{proof}
Assume that \ref{i1}$\Leftrightarrow$\ref{i2}$\Leftrightarrow$\ref{i3} holds true for $i=1$, and suppose that \ref{i1} holds for $i=2$, i.e., $\f_2(J)\subseteq \f_2(A)\cap\M(A,J)$ for some ideal $J\in\I(A)$.\
One can re-write this condition as $J\subseteq \f_2^{-1}(\f_2(A)\cap\M(A,J))=\f_1^{-1}(\f_1(A)\cap\M(A,J))$, and thus $\f_1(J)\subseteq\f_1(A)\cap\M(A,J)$.\
Since, by assumption, this is equivalent to  $J=\C_0(G)\otimes I$ for some $I\in\I(B)$, and to $J=\f_1^{-1}(\f_1(A)\cap\M(A,J))=\f_2^{-1}(\f_2(A)\cap\M(A,J))$, we have that \ref{i1}$\Leftrightarrow$\ref{i2}$\Leftrightarrow$\ref{i3} holds also when $i=2$.
\end{proof}

\begin{remark}\label{rem:idealproduct}
Let $A$ and $B$ be \Cs-algebras.\
It is shown in \cite[Propositions 2.16(iii)+2.17(2)]{BK04} that an homeomorphism $\Prim(A)\times\Prim(B)\cong\Prim(A\otimes B)$ when either $A$ or $B$ is exact is given by
\begin{equation*}
\Prim(A)\times\Prim(B) \to \Prim(A\otimes B), \quad (I,J) \mapsto (I\otimes B) + (A\otimes J)
\end{equation*}
for all $I\in\Prim(A)$ and $J\in\Prim(B)$.
\end{remark}

The following result is a generalisation of Theorem \ref{thm:existencephi} to the dynamical setting.\
In particular, it provides sufficient conditions for the resulting map $\f:A\to\M(A)$ to be equivariant with respect to a given action.

\begin{theorem}[{cf.\ \cite[Theorem 5.8]{KP23}}]\label{thm:existencephi2}
Let $B$ be a separable, stable \Cs-algebra, and denote the \Cs-algebra $\C_0(G)\otimes B$ by $A$.\
Let $\alpha:G\curvearrowright A$ be an action that is conjugate to $\Ad(\lambda^{\infty})\otimes\alpha:G\curvearrowright\K(\Hil_G^{\infty})\otimes A$, and such that $\alpha^{\sharp}=\tau\times\gamma:G\curvearrowright G\times\Prim(B)$, where $\tau:G\curvearrowright G$ and $\gamma:G\curvearrowright\Prim(B)$ are continuous actions.\footnote{By Remark \ref{rem:idealproduct}, $\Prim(A)\cong G\times\Prim(B)$.\ The action $\tau\times\gamma$ is the continuous action given by acting componentwise on the product $G\times\Prim(B)$.}
Then, there exists an $\alpha$-to-$\alpha$-equivariant, non-degenerate, injective $\ast$-homomorphism $\f:A \to \M(A)$ with $\f(A)\cap A=\{0\}$, and such that the following are equivalent for any ideal $J\in\I(A)$,
\begin{enumerate}[label=\textup{(\roman*)},leftmargin=*]
\item $\f(J)\subseteq\f(A)\cap\M(A,J)$,
\item $J=\C_0(G)\otimes I$ for some $I\in\I(B)$,
\item $\f(J)=\f(A)\cap\M(A,J)$.
\end{enumerate}
\end{theorem}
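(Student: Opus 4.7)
The plan is to construct an equivariant refinement of the $\ast$-homomorphism from Theorem \ref{thm:existencephi}, then use Lemmas \ref{lem:ideals-preimage} and \ref{lem:tri-implication} to transfer the equivalence of the three ideal-theoretic conditions from the non-equivariant setting. The key insight is that we do not need to rebuild the ideal analysis from scratch, only to engineer equivariance while preserving the action on ideals of the form $\C_0(G)\otimes I$.

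First, I would exploit the hypothesis $\alpha \cong \Ad(\lambda^\infty)\otimes\alpha$ on $\K(\Hil_G^\infty)\otimes A$ to extract the following equivariant structure inside $\M(A)$: a non-degenerate $\Ad(\lambda^\infty)$-to-$\alpha$ equivariant embedding $\zeta:\K(\Hil_G^\infty)\to\M(A)$, and a non-degenerate $\alpha$-to-$\alpha$ equivariant embedding $\iota:A\to\M(A)$, whose images commute. Extending $\zeta$ to multipliers (strictly on bounded sets) and composing with the multiplication representation $\pi:\C_0(G)\to\M(\K(\Hil_G^\infty))$ produces $\psi := \zeta\circ\pi:\C_0(G)\to\M(A)$. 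A direct computation using $\Ad(\lambda_g)\pi(f) = \pi(f\circ g^{-1}\cdot)$ shows that $\psi$ intertwines the left-translation action on $\C_0(G)$ with $\alpha$ on $\M(A)$.

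Next, I would define
\[
\f:A\to\M(A),\qquad \f(f\otimes b) := \psi(f)\cdot \iota(f\otimes b),
\]
extended bilinearly. This is a well-defined $\ast$-homomorphism because the ranges of $\psi$ and $\iota$ commute, and it inherits injectivity and non-degeneracy from its building blocks. The property $\f(A)\cap A = \{0\}$ is checked exactly as in Theorem \ref{thm:existencephi}: the image of $\f$ sits in the ``multiplier-like'' factor $\zeta(\M(\K(\Hil_G^\infty)))\cdot\iota(A)$, whose intersection with $A\subseteq\M(A)$ reduces to $\{0\}$ by independence of the two embeddings. Equivariance of $\f$ follows by combining the equivariances of $\psi$ and $\iota$, after identifying the action $\tau$ with the left-translation action implicitly produced by the conjugacy $\alpha\cong\Ad(\lambda^\infty)\otimes\alpha$.

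For the equivalence of (i), (ii), (iii), let $\f_0$ denote the $\ast$-homomorphism supplied by Theorem \ref{thm:existencephi}, which already satisfies the desired equivalence. I would verify by direct computation that
\[
\overline{A\,\f(J)\,A} \;=\; \overline{A\,\f_0(J)\,A} \qquad \text{for every } J\in\I(A),
\]
using the common factor $\pi\otimes\,\text{(stabilization data)}$ that both constructions are built from. Lemma \ref{lem:ideals-preimage} then yields $\f^{-1}(\f(A)\cap\M(A,J)) = \f_0^{-1}(\f_0(A)\cap\M(A,J))$ for all $J$, and Lemma \ref{lem:tri-implication} immediately transfers the three-way equivalence from $\f_0$ to $\f$.

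The main obstacle is verifying the equivariance of $\f$ with respect to the specific action $\tau$ from the hypothesis. The formula above is most naturally equivariant when $\C_0(G)$ carries left translation, so reconciling this with a general $\tau$ requires a careful use of the conjugacy $\alpha\cong\Ad(\lambda^\infty)\otimes\alpha$, which is the only available bridge between $\alpha$ and the left-regular representation. All other steps are essentially bookkeeping around Theorem \ref{thm:existencephi}.
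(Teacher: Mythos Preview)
Your endgame is exactly right: the paper also finishes by proving $\overline{A\f(J)A}=\overline{A\f_0(J)A}$ for all $J$ and then invoking Lemmas \ref{lem:ideals-preimage} and \ref{lem:tri-implication}. The problem is your construction of $\f$ itself.

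The formula $\f(f\otimes b)=\psi(f)\cdot\iota(f\otimes b)$ does not define a $\ast$-homomorphism, nor even a linear map: the symbol $f$ occurs twice, so the expression is homogeneous of degree two in $f$. Concretely, $f\otimes b=(2f)\otimes(b/2)$, but $\psi(f)\iota(f\otimes b)\neq\psi(2f)\iota((2f)\otimes(b/2))$. There is no way to ``extend bilinearly'' a genuinely quadratic expression. The analogous formula in Theorem \ref{thm:existencephi} works because there it reads $f\otimes b\mapsto \psi(f)\cdot b^{\infty}$, which is the product of two honest $\ast$-homomorphisms $\C_0(G)\to\M(B)$ and $B\to\M(B)$ with commuting ranges; your $\iota$ is defined on all of $A$ and reintroduces the $\C_0(G)$-variable.

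A second gap: you never use the hypothesis $\alpha^{\sharp}=\tau\times\gamma$, and it is not decorative. Once one manufactures equivariance by any averaging procedure, the averaged map no longer has the explicit tensor form of $\f_0$, and one must \emph{prove} that it still satisfies $\overline{A\f(J)A}=\overline{A\f_0(J)A}$. In the paper this is where the product structure of $\alpha^{\sharp}$ enters: it guarantees that $J\mapsto\overline{A\f_0(J)A}$ is $\alpha^{\sharp}$-equivariant on $\I(A)$, which (via \cite[Lemma 4.6]{PS23}) controls the ideals generated by the averaged map.

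The paper's fix for the first problem is to avoid any tensor-leg formula entirely. It defines $\hat{\f}_0:A\to\C_b^{s}(G,\M(A))=\M(\C_0(G)\otimes A)$ by $\hat{\f}_0(a)(g)=\alpha_g(\f_0(\alpha_g^{-1}(a)))$, which is automatically $\alpha$-to-$\bar{\alpha}$ equivariant, and then composes with $\pi\otimes\id_A:\M(\C_0(G)\otimes A)\to\M(\K(\Hil_G^{\infty})\otimes A)$ and the conjugacy $\eta:\M(\K(\Hil_G^{\infty})\otimes A)\to\M(A)$. Your embeddings $\zeta,\iota$ are precisely the two legs of this $\eta$, so you had the right raw material; what was missing was the averaging step $\hat{\f}_0$ that makes the composite a genuine $\ast$-homomorphism while still allowing the ideal comparison to go through.
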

\begin{proof}
By Theorem \ref{thm:existencephi}, there exists an injective, non-degenerate $\ast$-homomorphism $\f_0:A\to\M(A)$ such that $\f_0(A)\cap A=\{0\}$, and such that conditions (i), (ii), (iii) above are equivalent for any ideal $J\in\I(A)$.\
Consider the injective, non-degenerate $\ast$-homomorphism
\begin{equation*}
 \hat{\f}_0:A\to\C_b^{s}(G,\M(A))=\M(\C_0(G)\otimes A),\quad \hat{\f}_0(a)(g) = \alpha_{g}(\f_0(\alpha_{g^{-1}}(a)))
\end{equation*}
for all $g\in G$, and $a\in A$, where $\C_b^{s}(G,\M(A))$ denotes the \Cs-algebra of bounded, strictly continuous functions from $G$ to $\M(A)$.\
Moreover, we have by definition that $\hat{\f}_0(A)\cap(\C_0(G)\otimes A)=\{0\}$.\
If we equip $\C_b^{s}(G,\M(A))$ with the $G$-action $\bar{\alpha}$ given by
\begin{equation*}
\bar{\alpha}_g(f)(h) = \alpha_g(f(g^{-1}h))
\end{equation*}
for all $g,h\in G$, and $f\in \C_b^{s}(G,\M(A))$, it follows that $\hat{\f}_0$ is $\alpha$-to-$\bar{\alpha}$-equivariant.\
By Remark \ref{rem:idealproduct}, any ideal $J\in\Prim(A)$ can be written as $J = \C_0(G)\otimes I+\C_0(G\setminus h)\otimes B$ for some $I\in\Prim(B)$ and $h\in G$.\
By definition of $\f_0$ (see proof of Theorem \ref{thm:existencephi}) we have that $\overline{A\f_0(J)A} = \C_0(G)\otimes I$, and therefore that
\begin{equation*}
\overline{A\f_0(\alpha^{\sharp}_g(J))A} = \C_0(G)\otimes\gamma_g(I) = \alpha^{\sharp}_g\left(\overline{A\f_0(J)A}\right)
\end{equation*}
for all $g\in G$.\
Pick now an ideal $J\in\I(A)$.\
We know that $J$ can be written as $J=\bigcap_{\lambda\in\Lambda}J_\lambda$, where $J_\lambda\in\Prim(A)$ for all $\lambda\in\Lambda$.\
Then,
\begin{align*}
\alpha^{\sharp}_g(\overline{A\f_0(J)A})
=\bigcap_{\lambda}\alpha^{\sharp}_g\left(\overline{A\f_0(J_\lambda)A}\right)
=\bigcap_{\lambda}\overline{A\f_0(\alpha^{\sharp}_g(J_\lambda))A}
=\overline{A\f_0(\alpha^{\sharp}_g(J))A}
\end{align*}
for all $g\in G$.\
As a consequence, $\overline{A\f_0(-)A}:\I(A)\to\I(A)$ is an $\alpha^{\sharp}$-equivariant Cuntz morphism (in the sense of \cite{PS23}), and we may apply \cite[Lemma 4.6]{PS23} to conclude that
\begin{equation*}
\overline{(\C_0(G)\otimes A)\hat{\f}_0(J)(\C_0(G)\otimes A)} = \C_0(G)\otimes\overline{A\f_0(J)A}
\end{equation*}
for all $J\in\I(A)$.\

By assumption, there exists a conjugacy $\eta:(\M(\K\otimes A),\Ad(\lambda^{\infty})\otimes\alpha)\to (\M(A),\alpha)$, where $\K$ is tacitly assumed to be $\K(\Hil_G^{\infty})$.\
Consider the $\ast$-homomorphism $\f:A\to\M(A)$ given by the composition of the following maps,
\begin{equation*}
A \xhookrightarrow{\hat{\f}_0} \C_b^{s}(G,\M(A)) = \M(\C_0(G,A)) \xhookrightarrow{\pi\otimes\id_{A}} \M(\K\otimes A) \xrightarrow{\eta} \M(A),
\end{equation*}
where $\pi:\C_0(G)\to\M(\K)$ is the $\ast$-representation as multiplication operators, and $\pi\otimes\id_{A}$ is $\bar{\alpha}$-$(\Ad(\lambda^{\infty})\otimes\alpha)$ equivariant.\
Note that $\f$ is an injective, non-degenerate, $\alpha$-to-$\alpha$ equivariant $\ast$-homomorphism because it is a composition of injective, non-degenerate, equivariant $\ast$-homomorphisms.\
Moreover, one can compute that $\f(A) \cap A = \{0\}$ holds true ultimately because $\f_0(A) \cap A = \{0\}$.

Now, we want to prove that (i)$\Leftrightarrow$(ii)$\Leftrightarrow$(iii) holds true for the map $\f$.\
By combining Lemma \ref{lem:ideals-preimage} and Lemma \ref{lem:tri-implication}, it is sufficient to show that the $\overline{A\f(J)A}=\overline{A\f_0(J)A}$ for every ideal $J\in\I(A)$.\
Fix an ideal $J\in\I(A)$.\
First notice that the ideal generated by $\eta \left( \pi(\C_0(G))\otimes A\right)$ in $A$ is $A$.\
This implies that the ideal generated by $\f(J)$ in $A$ coincides with the ideal generated by the image of $\overline{(\C_0(G)\otimes A)\hat{\f}_0(J)(\C_0(G)\otimes A)}$ under $\eta \circ (\pi\otimes\id_A)$.\
From the observation above we have that $\overline{(\C_0(G)\otimes A)\hat{\f}_0(J)(\C_0(G)\otimes A)}$ is $\C_0(G)\otimes\overline{A\f_0(J)A}$, and therefore $\overline{A\f(J)A}=\overline{A\f_0(J)A}$.
\end{proof}

\begin{notation}
Let $A$ be a \Cs-algebra equipped with an action $\alpha:G\curvearrowright A$, and an automorphism $\sigma$ such that $\alpha_g \circ \sigma=\sigma \circ \alpha_g$ for all $g\in G$.\
We denote by $\alpha\rtimes\Z$ the $G$-action on $A\rtimes_{\sigma}\Z$ that extends $\alpha$ and acts trivially on the canonical unitary representing $\Z$.
\end{notation}

\begin{remark}\label{rem:continuity induced action}
Let $A$ be a \Cs-algebra equipped with an action $\alpha:G\curvearrowright A$.\
The action $\alpha$ induces an algebraic action $\alpha^{\sharp}:G\curvearrowright\I(A)$ given by $\alpha^{\sharp}_g(I)=\{\alpha_g(x) \mid x\in I\}$, or equivalently by $\alpha^{\sharp}_g(I)=\ker(\pi \circ \alpha_{g^{-1}})$, where $I=\ker(\pi)$ for some non-zero irreducible representation $\pi$ of $A$.\
It is a well-known fact that $\alpha^{\sharp}$ restricts to a continuous action on $\Prim(A)$, which we call again by $\alpha^{\sharp}$, with slight abuse of notation.\
A proof of this fact can be found in \cite[Lemma 7.1]{RW98}, and a different proof is carried out in \cite[Lemma 2.25]{PS23}.
\end{remark}

Now we can prove Theorem \ref{thm-intro:realisation}, the main result of this article.

\begin{proof}[Proof of Theorem \ref{thm-intro:realisation}]
Denote the \Cs-algebra $\C_0(G)\otimes B$ by $A$, and let $\K$ be concretely represented as $\K(\Hil_G^{\infty})$.\
Since $\gamma$ is continuous between $G\times\Prim(B)$ and $\Prim(B)$, the map given by
\begin{equation*}
G\times\Prim(B) \to G\times\Prim(B), \quad (g,\p) \mapsto (g,\gamma_g(\p))
\end{equation*}
for all $g\in G$, and $\p\in\Prim(B)$, is a homeomorphism.\
By composing this map with the canonical homeomorphism between $G\times\Prim(B)$ and $\Prim(A)$, we obtain a homeomorphism $\Theta:\Prim(A)\to\Prim(A)$.\
By \cite[Theorem 6.13]{Gab20} $\Theta$ is induced by an automorphism $\theta\in\Aut(A)$.

Since $B$ is stable by assumption, there exist an isomorphism $\kappa:\M(B\otimes\K)\hookrightarrow\M(B)$ and an embedding $\zeta:\M(\K)\to\M(B)$ as in Remark \ref{rem:compacts} such that $\kappa(\1\otimes\lambda^{\infty}_g)=\zeta(\lambda^{\infty}_g)$.\
Denote by $\rt$ the $G$-action on $\C_0(G)$ induced by the right translation on $G$, i.e., $\rt_g(f)(h)=f(hg)$ for all $f\in\C_0(G)$, $g,h\in G$.\
It is now possible to define an action $\alpha:G\curvearrowright A$ as follows,
\begin{equation*}
\alpha_g = \theta^{-1} \circ \left(\rt_g \otimes \Ad(\zeta(\lambda^{\infty}_g))\right) \circ \theta
\end{equation*}
for all $g\in G$.
It follows that $\alpha$ is conjugate to $\Ad(\lambda^{\infty})\otimes\alpha:G\curvearrowright\K\otimes A$ by construction.

Note that the action induced by $\alpha$ on $\Prim(A)$ is given by
\begin{equation*}
\alpha^{\sharp}_g(\C_0(G)\otimes I + \C_0(G\setminus h)\otimes B) = \C_0(G)\otimes\gamma_g(I) + \C_0(G\setminus{hg^{-1}})\otimes B,
\end{equation*}
for all $g,h\in G$ and $I\in\Prim(B)$, where we used Remark \ref{rem:idealproduct}.\
Therefore, $\alpha^{\sharp}$ is $\rt^{\sharp}\otimes\gamma$ on $\Prim(A)$.\
We may apply Theorem \ref{thm:existencephi2} to obtain an $\alpha$-equivariant, injective, non-degenerate $\ast$-homomorphism $\f:A\to\M(A)$ such that $\f(A)\cap A=\{0\}$ and with the following property.\
Any ideal $J\in\I(A)$ such that $\f(J)\subseteq\f(A)\cap\M(A,J)$ already satisfies $\f(J)=\f(A)\cap\M(A,J)$ and is of the form $J=\C_0(G)\otimes I$ for some $I\in\I(B)$.\
In particular, we have a natural order isomorphism $\Phi:\I(B)\to\I(A)^{\f}$, given by $\Phi(I)=\C_0(G)\otimes I$ for all $I\in\I(B)$.

By Theorem \ref{thm:sicisomorphismphi}, there exists a natural order isomorphism $\Psi:\I(A)^{\f}\to \I((\f_{\infty}(A))_{\sigma}\rtimes_{\sigma}\Z)$ given by $\Psi(J)=(\f_{\infty}(J))_{\sigma}\rtimes_{\sigma}\Z$ for all $J\in\I(A)^{\f}$, with inverse $\Psi^{-1}(I)=\f_{\infty}^{-1}(I\cap\f_{\infty}(A))$ for all $I\in\I(\f_{\infty}(A)_{\sigma}\rtimes_{\sigma}\Z)$.\
Hence, we have an order isomorphism $\Psi \circ \Phi : \I(B) \to \I((\f_{\infty}(A))_{\sigma}\rtimes_{\sigma}\Z)$.\
Note that the crossed product $(\f_{\infty}(A))_{\sigma}\rtimes_{\sigma}\Z$ is separable from separability of $A$, nuclear because arises from an amenable group action on a nuclear \Cs-algebra, and stable by Proposition \ref{prop:stability}.\
Hence, one can apply the classification theorem for $\O2$-stable \Cs-algebras of Gabe and Kirchberg (see \cite{Kir} or \cite[Theorem 6.13]{Gab20}) to obtain a $\ast$-isomorphism $\eta: B \to \left((\f_{\infty}(A))_{\sigma}\rtimes_{\sigma}\Z\right) \otimes\O2$ inducing $\I(\id\otimes\1_{\O2}) \circ \Psi \circ \Phi$.\footnote{Here, $\I(\id\otimes\1_{\O2})$ denotes the order isomorphism $\I(((\f_{\infty}(A))_{\sigma}\rtimes_{\sigma}\Z))\to\I(((\f_{\infty}(A))_{\sigma}\rtimes_{\sigma}\Z)\otimes\O2)$ that sends an ideal $I$ to $I\otimes\O2$.}

Denote by $\beta:G\curvearrowright B$ the action given by $\beta_g = \eta^{-1} \circ ((\alpha_{\infty}\rtimes\Z)\otimes\id_{\O2})_g \circ \eta$ for all $g\in G$.\
Then $\beta$ satisfies the following identity,
\begin{align*}
\beta^{\sharp}_g(I) &= \Phi^{-1} \circ \Psi^{-1} \circ (\alpha\rtimes\Z)^{\sharp}_g \left(\left(\f_{\infty}(\C_0(G)\otimes I)\right)_{\sigma}\rtimes_{\sigma}\Z\right) \\
&= \Phi^{-1} \circ \Psi^{-1} \left(\left(\f_{\infty}(\C_0(G)\otimes \gamma_g(I))\right)_{\sigma}\rtimes_{\sigma}\Z\right) \\
&= \Phi^{-1} (\C_0(G)\otimes \gamma_g(I)) = \gamma_g(I)
\end{align*}
for all $g\in G$, and $I\in\I(B)$.\
It follows that $\beta^{\sharp}=\gamma$ as actions on $\Prim(B)$, which concludes the proof.
\end{proof}

The following definition makes sense for any separable, unital, strongly self-absorbing \Cs-algebra $\mathcal{D}$ (see \cite[Definition 1.3]{TW07}), but we only need the case $\mathcal{D}=\O2$.

\begin{definition}[{see \cite[Definition 3.1]{Sza18a} and \cite[Definition 5.3]{Sza21}}]
An action $\alpha:G\curvearrowright A$ on a separable \Cs-algebra is said to be \emph{equivariantly $\O2$-stable} if $\alpha\otimes\id_{\O2}$ is cocycle conjugate to $\alpha$, where $\id_{\O2}$ denotes the trivial $G$-action on $\O2$.
\end{definition}

Isometric shift-absorbing actions on separable \Cs-algebras are introduced in \cite[Definition C]{GS22b}, and we refer the reader to the same reference for more information about these actions.
We also do not define amenability or the quasicentral approximation property \cite{Suz19}, which are equivalent by \cite{BEW20,BEW20a,OS21}.\
Suffice it to say that all dynamical properties mentioned above pass to tensor products.

\begin{remark}\label{rem:amenable action on O2}
We give a brief argument explaining that any second-countable, locally compact group admits an amenable, isometrically shift-absorbing, equivariantly $\O2$-stable action on $\O2\otimes\K$.\
First, recall that by \cite[Theorem 6.1]{OS21}, for any amenable action $\alpha:G\curvearrowright A$ on a separable, nuclear \Cs-algebra, there exists an action $\beta:G\curvearrowright B$ on a purely infinite simple, separable, nuclear \Cs-algebra containing $(A,\alpha)$ as a subsystem.\
In particular, since any locally compact, second-countable group $G$ acts amenably on $A=\C_0(G)$, one may find an amenable action $\beta:G\curvearrowright B$ as above.\
Moreover, we may pick any isometrically shift-absorbing action $\delta:G\curvearrowright\Oinf$ such as  $\delta=\gamma^{\otimes\infty}$ from \cite[Definition 3.4]{GS22b}.\
Since amenability and isometric shift-absorption are preserved under tensoring with an action, then $\beta\otimes\delta\otimes\id_{\O2\otimes\K}$ is an amenable, isometrically shift-absorbing and equivariantly $\O2$-stable action on $B\otimes\Oinf\otimes\O2\otimes\K\cong\O2\otimes\K$, where the last isomorphism follows from Kirchberg's $\O2$-absorption theorem \cite{Kir,KP00} and Zhang's dichotomy \cite{Zha92}.

If the acting group is moreover exact, one can always find an amenable, isometrically shift-absorbing, equivariantly $\O2$-stable action on $\O2$.\
One way of seeing this is by picking an amenable $G$-action on $\Oinf$, which exists by \cite[Theorem B]{Suz24}, and then tensoring this action with $\delta\otimes\id_{\O2}$, where $\delta:G\curvearrowright\Oinf$ is as in the previous paragraph.\
We seize the opportunity to rectify an incorrect reference in \cite[Remarks 4.22+5.9]{PS23}, where the same statement as above is remarked with an (erroneous) reference to \cite[Theorem 6.6]{OS21} where it should be \cite[Theorem B]{Suz24}.
\end{remark}

We can now prove Theorem \ref{cor-intro} as an application of Theorem \ref{thm-intro:realisation} combined with the classification theorem for equivariantly $\O2$-stable actions in \cite{PS23}.

\begin{proof}[Proof of Theorem \ref{cor-intro}]
Let $A$ be separable, nuclear, stable and $\O2$-stable.\
By \cite[Theorem 5.6]{PS23}, an amenable, isometrically shift-absorbing, equivariantly $\O2$-stable action $\alpha:G\curvearrowright A$ is classified, up to cocycle conjugacy, by the induced action $\alpha^{\sharp}:G\curvearrowright\Prim(A)$.\
Hence, by associating $\alpha^{\sharp}$ to $\alpha$, one gets that the map
\begin{equation*}
\frac{\begin{Bmatrix}\text{\rm{amenable, isometrically shift-absorbing,}}\\ \text{\rm{equivariantly $\O2$-stable actions }}G\curvearrowright A \end{Bmatrix}}{\text{\rm{cocycle conjugacy}}} \longrightarrow \frac{\begin{Bmatrix}\text{\rm{continuous actions}}\\ G\curvearrowright\Prim(A) \end{Bmatrix}}{\text{\rm{conjugacy}}}
\end{equation*}
is injective.\
To see that it is also surjective, note that by Theorem \ref{thm-intro:realisation}, given a continuos action $\sigma:G\curvearrowright\Prim(A)$, there exists an action $\alpha:G\curvearrowright A$ such that $\alpha^{\sharp}=\sigma$.\
Arguing as in Remark \ref{rem:amenable action on O2}, one can find an amenable, isometrically shift-absorbing, equivariantly $\O2$-stable action $\beta:G\curvearrowright \O2\otimes\K$.\
Then $\alpha':=\alpha\otimes\beta$ is an amenable, isometrically shift-absorbing, equivariantly $\O2$-stable action on $A\otimes\O2\otimes\K\cong A$ such that $(\alpha')^{\sharp}$ is conjugate to $\sigma$.\
This concludes the first part of the proof.

If $G$ is compact, the bijection above holds true when every instance of ``cocycle conjugacy" is replaced by ``conjugacy", in which case one needs to apply \cite[Corollary 5.8]{PS23} instead of \cite[Theorem 5.6]{PS23} in the previous paragraph.
\end{proof}


\textbf{Acknowledgements.}
The author was supported by PhD-grant 1131623N funded by the Research Foundation Flanders (FWO).\
He is grateful to G.\ Szabó for valuable discussions on the topic of this article, and to N.\ C.\ Phillips for kindly sharing a copy of his work with E.\ Kirchberg.\
Moreover, he would like to thank the anonymous referee for their remarks that led to numerous little improvements in this article.

\bibliography{references.bib}
\bibliographystyle{mybstnum.bst}

\end{document}